\numberwithin{equation}{section}
\theoremstyle{plain}
\newtheorem{theorem}{Theorem}[section]
\newtheorem{lemma}[theorem]{Lemma}
\newtheorem{corollary}[theorem]{Corollary}
\newtheorem{conjecture}[theorem]{Conjecture}
\newtheorem{question}[theorem]{Question}
\newtheorem{claim}[theorem]{Claim}
\theoremstyle{definition}
\newtheorem{problem}[theorem]{Problem}
\theoremstyle{remark}
\newtheorem{case[theorem]}{Case}
\title[The generalized $k$-resultant modulus set]{The generalized $k$-resultant modulus set problem in finite fields}
\author{David Covert, Doowon Koh, and Youngjin Pi}
\address{Department of Mathematics and Computer Science\\
University of Missouri-St. Louis \\
St. Louis, MO 63121 USA} \email{covertdj@umsl.edu}
\address{Department of Mathematics\\
Chungbuk National University \\
Cheongju Chungbuk 28644, Korea}
\email{koh131@chungbuk.ac.kr}
\address{Department of Mathematics\\
Chungbuk National University \\
Cheongju Chungbuk 28644, Korea}
\email{pi@chungbuk.ac.kr}
\thanks{Key words and phrases: Erd\H{o}s distance problem, finite fields, $k$-resultant modulus set.  \\
Doowon Koh was supported by Basic Science Research Program through the National Research Foundation of Korea(NRF) funded by the Ministry of Education, Science and Technology(NRF-2015R1A1A1A05001374)}
\subjclass[2010]{52C10, 42B05, 11T23}
\begin{document}

\begin{abstract} Let $\mathbb F_q^d$ be the $d$-dimensional vector space over the finite field $\mathbb F_q$ with $q$ elements.
Given $k$ sets $E_j\subset \mathbb F_q^d$ for $j=1,2,\ldots, k$, the generalized $k$-resultant modulus set, denoted by $\Delta_k(E_1,E_2, \ldots, E_k)$, is defined by
$$ \Delta_k(E_1,E_2, \ldots, E_k)=\left\{\|{\bf x}^1+{\bf x}^2+\cdots+{\bf x}^k\|\in \mathbb F_q:{\bf x}^j\in E_j,\, j=1,2,\ldots, k\right\},$$
where $\|{\bf y}\|={\bf y}_1^2+ \cdots + {\bf y}_d^2$ for ${\bf y}=({\bf y}_1, \ldots, {\bf y}_d)\in \mathbb F_q^d.$
We prove that if $\prod\limits_{j=1}^3 |E_j| \ge C q^{3\left(\frac{d+1}{2} -\frac{1}{6d+2}\right)}$ for $d=4,6$  with a sufficiently large constant $C>0$, then
$|\Delta_3(E_1,E_2,E_3)|\ge cq$ for some constant $0<c\le 1,$ and if $\prod\limits_{j=1}^4 |E_j| \ge C q^{4\left(\frac{d+1}{2} -\frac{1}{6d+2}\right)}$ for even $d\ge 8,$ then  $|\Delta_4(E_1,E_2,E_3, E_4)|\ge cq.$ This generalizes the previous result in \cite{CKP16}. We also show that if $\prod\limits_{j=1}^3 |E_j| \ge C q^{3\left(\frac{d+1}{2} -\frac{1}{9d-18}\right)}$ for even $d\ge 8,$ then  $|\Delta_3(E_1,E_2,E_3)|\ge cq.$ This result improves the previous work in \cite{CKP16} by removing $\varepsilon>0$ from the exponent.
\end{abstract}
\maketitle

\section{Introduction}
The  Erd\H os distance problem asks us to determine the minimal number of  distinct distances between any $N$ points in $\mathbb R^d.$
This problem was initially posed by Paul Erd\H os \cite{Er46} who conjectured
that $g_2(N) \gtrsim N/\sqrt{\log N}$ and $g_d(N) \gtrsim N^{2/d}$ for
$d \geq 3,$ where $g_d(N)$ denotes the minimal number of distinct distances between $N$ distinct points of $\mathbb{R}^d$, and  $X \gtrsim Y$ for $X,Y>0$ means that there is a constant $C>0$ independent of $N$ such that $ CX\ge Y.$
This conjecture in two dimension was resolved up to the logarithmic factor by Guth and Katz \cite{GuKa10}. However, the problem is still open in higher dimensions.\\

In \cite{BKT}, Bourgain, Katz, and Tao initially introduced the finite field analog of the Erd\H os distance problem and
Iosevich and Rudnev \cite{IR07} developed the problem in the general finite field setting. Let $\mathbb F_q^d$ be the $d$-dimensional vector space over the finite field $\mathbb F_q$ with $q$ elements. Throughout this paper we always assume that the characteristic of $\mathbb F_q$ is strictly greater than two. For a set $E\subset \mathbb F_q^d,$ the distance set, denoted by $\Delta_2(E)$, is defined as
$$\Delta_2(E)=\{ \|{\bf x}-{\bf y}\|\in \mathbb F_q: {\bf x}, {\bf y}\in E\},$$
where $\|\alpha\|=\alpha_1^2+ \cdots+ \alpha_d^2$ for $\alpha=(\alpha_1, \ldots, \alpha_d)\in \mathbb F_q^d.$
 With this definition of the distance set,
Bourgain, Katz, and Tao  \cite{BKT}  proved that  if  $q \equiv 3 \pmod{4}$ is prime and $E \subset \mathbb{F}_q^2$ with  $ q^{\delta} \lesssim |E| \lesssim q^{2 - \delta}$ for $\delta > 0,$ then there exists $\epsilon = \epsilon(\delta)$ such that $|\Delta_2(E)| \ge |E|^{1/2 + \epsilon}$.  
Here we recall that  $A\lesssim B$ for $A, B>0$ means that there exists a constant $C>0$ independent of $q$ such that $A\le CB,$ and we write $B\gtrsim A$ for $A\lesssim B.$ We also use $A\sim B$ if $\lim_{q\to \infty} A/B=1.$
This result was obtained by finding the connection between incidence geometry in $\mathbb{F}_q^2$ and the distance set.  Unfortunately, it is not simple to find the relationship between $\delta$ and $\epsilon$  from their proof. Furthermore, if $E=\mathbb F_q^2$, then  $|\Delta_2(E)|=\sqrt{|E|},$ which shows that the exponent $1/2$ can not be generically improved. If $-1$ is a square in $\mathbb F_q$, another unpleasant example exists with the finite field Erd\H os distance problem. For instance, let
$E=\{(t, it)\in \mathbb F_q^2: t\in \mathbb F_q\},$
where $i$ denotes an element of $\mathbb F_q$ such that $i^2=-1.$
Then it is straightforward to see that $|E|= q$ and $|\Delta_2(E)|=|\{0\}|=1.$
In view of aforementioned examples, Iosevich and Rudnev \cite{IR07} reformulated the Erd\H os distance problem in general finite field setting as follows.

\begin{question}\label{Q1.1} Let $E\subset \mathbb F_q^d.$ What is the smallest exponent $\beta>0$ such that
if $|E|\ge Cq^\beta$ for a sufficiently large constant $C>0$ then $|\Delta_2(E)|\ge c q$ for some $0<c\le 1?$
\end{question}
The problem in this question is called the Erd\H os-Falconer distance problem in the finite field setting.
Note that a distance can be viewed as an 1-dimensional simplex. Readers may refer to \cite{HI07, BIP14, V12, BHIPR17, PPV17} and references contained therein for the $k$-simplices problems. 
In \cite{IR07}, it was shown that $\beta\le (d+1)/2$ for all dimensions $d\ge 2.$ The authors in \cite{HIKR10} proved that $\beta=(d+1)/2$ for general odd dimensions $d\ge 3.$ On the other hand, they conjectured that if the dimension $d\ge 2$ is even, then $\beta$ can be improved to $d/2.$ In dimension two, the authors in \cite{CEHIK09} applied the sharp finite field restriction estimate for the circle on the plane so that they show $\beta\le 4/3$ which improves the exponent $(d+1)/2$, a sharp exponent in general odd dimensions $d.$ 
It was also observed in \cite{BHIPR17} that the exponent $4/3$ can be obtained by applying the group action.
Furthermore, considering the perpendicular bisector of two points in a set of $\mathbb F_q^2,$ the authors in \cite{HLR16} proved that the exponent $4/3$  holds for the pinned distance problem case. However, in higher even dimensions $d\ge 4,$ the exponent $(d+1)/2$ has not been improved. To demonstrate some possibility that the exponent $(d+1)/2$ could be improved for even dimensions $d\ge 4,$ the authors in \cite{CKP16} introduced a $k$-resultant modulus set  which generalizes the distance set in the sense that any $k$ points  can be selected from a set $E\subset \mathbb F_q^d$ to determine an object similar to a distance.
More precisely, for a set $E\subset \mathbb F_q^d$  we define a  $k$-resultant modulus set $\Delta_k(E)$ as
$$ \Delta_k(E)=\left\{\|{\bf x}^1\pm{\bf x}^2+\cdots\pm{\bf x}^k\|\in \mathbb F_q:{\bf x}^j\in E\right\}.$$
Since the sign $``\pm"$ does not affect on our results in this paper, we shall simply take $``+"$ signs.  That is, we will use the definition
$$ \Delta_k(E)=\left\{\| {\bf x}^1+{\bf x}^2+\cdots +{\bf x}^k\|\in \mathbb F_q:{\bf x}^j\in E\right\}$$
for consistency.
With this definition of the $k$-resultant modulus set, the following question was proposed in \cite{CKP16}.
\begin{question}\label{Q1.2} Let $E\subset \mathbb F_q^d$ and $k\ge 2$ be an integer.
What is the smallest exponent $\gamma>0$ such that if $|E|\ge C q^\gamma$ for a sufficiently large constant $C>0$, then $|\Delta_k(E)|\ge c q$ for some $0<c\le 1?$
\end{question}
This problem is called the $k$-resultant modulus problem.
When $k=2$, this question is simply the finite field  Erd\H os-Falconer distance problem, and in this sense the $k$-resultant modulus problem is a direct generalization of the distance problem. It is obvious that the smallest exponent $\beta>0$ in Question \ref{Q1.1} is greater than or equal to the smallest exponent $\gamma>0$ in Question \ref{Q1.2}. In \cite{CKP16}, it was  conjectured that $\gamma$ must be equal to $\beta.$
This conjecture means that the solution $\gamma$ of Question \ref{Q1.2} is independent of the integer $k\ge 2.$ In other words, it is conjectured that the solution of the  Erd\H os-Falconer distance problem is the same as that of the $k$-resultant modulus problem.
In fact, the authors in \cite{CKP17} provided a simple example which shows that $\gamma=\beta=(d+1)/2$ for any integer $k\ge 2$ in odd dimensions $d\ge 3$ provided that $-1$ is a square number in $\mathbb F_q.$  On the other hand,  they conjectured that the smallest exponent $\gamma$ in Question \ref{Q1.2} is $d/2$ for even dimensions $d\ge 2$ and  all integers $k\ge 2$. In addition, they showed that if $k\ge 3$ and the dimension $d$ is even, then  one can improve the exponent $(d+1)/2$ which is sharp in odd dimensional case.
More precisely they obtained the following result.
\begin{theorem}\label{Thm1.3} Let $E\subset \mathbb F_q^d.$ Suppose that $C$ is a sufficiently large constant. Then the following statements hold:\\
\noindent{\it (1)} If  $d=4$ or $6,$ and
 $|E|\geq C q^{\frac{d+1}{2}-\frac{1}{6d+2}},$   then $|\Delta_3(E)|\geq c q$ for some $0 < c \leq 1.$\\
\noindent{\it (2)} If $d\ge 8$ is even and $|E|\geq C q^{\frac{d+1}{2}-\frac{1}{6d+2}},$ then $|\Delta_4(E)|\geq cq$ for some $0 < c \leq 1.$\\
\noindent{\it (3)} If $d\geq 8$ is even,
then for any $\varepsilon>0,$ there exists $C_{\varepsilon}>0$ such that
if $|E|\ge C_{\varepsilon}  q^{\frac{d+1}{2} - \frac{1}{9d -18} + \varepsilon},$ then $|\Delta_3(E)|\geq cq$ for some $0 < c \leq 1.$
\end{theorem}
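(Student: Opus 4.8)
The plan is to prove all three parts by the second-moment method, combined with Fourier inversion over $\mathbb F_q$ and restriction estimates for the sphere. Fix $E\subset\mathbb F_q^d$ and $k\in\{3,4\}$, and for $t\in\mathbb F_q$ put
\[
\nu(t)=\#\bigl\{({\bf x}^1,\dots,{\bf x}^k)\in E^k:\ \|{\bf x}^1+\dots+{\bf x}^k\|=t\bigr\}.
\]
Since $\sum_{t}\nu(t)=|E|^k$ and $\nu$ is supported on $\Delta_k(E)$, the Cauchy--Schwarz inequality gives $|\Delta_k(E)|\ge |E|^{2k}/\sum_t\nu(t)^2$, so in each case it suffices to prove $\sum_t\nu(t)^2\lesssim |E|^{2k}/q$ under the stated hypothesis on $|E|$. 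Writing $\chi$ for a nontrivial additive character of $\mathbb F_q$ and $\widehat h(m)=\sum_{x}h(x)\chi(-x\cdot m)$, expanding the indicator of $\{\|{\bf a}\|=\|{\bf b}\|\}$ into additive characters, and completing the square so that the quadratic Gauss sum $\mathfrak g$ (with $|\mathfrak g|=\sqrt q$) appears, one arrives at the exact identity
\[
\sum_{t\in\mathbb F_q}\nu(t)^2 \;=\; \frac{|E|^{2k}}{q}\;-\;q^{d-1}g({\bf 0})^2\;+\;\frac1{q^{d}}\sum_{\substack{m,\,m'\in\mathbb F_q^d\\ \|m\|=\|m'\|}}\widehat{1_E}(m)^{k}\,\overline{\widehat{1_E}(m')}^{k},
\]
where $g=1_E\ast\cdots\ast1_E$ is the $k$-fold convolution and $g({\bf 0})=\#\{({\bf x}^1,\dots,{\bf x}^k)\in E^k:{\bf x}^1+\dots+{\bf x}^k={\bf 0}\}\le |E|^{k-1}$. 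The first term is the target main term, the second is non-positive and is discarded, and in the last sum I would split off the frequencies with $m=0$ or $m'=0$, together with the part of the remainder carried by $\|m\|=\|m'\|=0$. These ``degenerate'' pieces are precisely where the isotropic cone intrudes — the pathology behind the example $E=\{(s,is):s\in\mathbb F_q\}$ — and I would handle them directly, using the hypothesis on $|E|$, the bound on $g({\bf 0})$, the count $\#\{{\bf y}:\|{\bf y}\|=0\}=q^{d-1}+O(q^{d/2})$, and standard (essentially $k=2$) control of the number of pairs of points of $E$ at squared distance $0$.

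It then remains to bound the \emph{generic} term $\mathcal G:=q^{-d}\sum_{t\ne0}\bigl|\sum_{\|m\|=t}\widehat{1_E}(m)^{k}\bigr|^{2}$; equivalently, to show $\sum_{t\ne0}\bigl|\sum_{m\in S_t}\widehat{1_E}(m)^{k}\bigr|^2\lesssim q^{d-1}|E|^{2k}$, where $S_t=\{m\in\mathbb F_q^d:\|m\|=t\}$. This is the heart of the matter, and it is where restriction theory for the sphere in $\mathbb F_q^d$ comes in. Factoring $\widehat{1_E}(m)^{k}=\widehat{1_E}(m)^{a}\,\widehat{1_E}(m)^{b}$ with $a+b=k$, applying Cauchy--Schwarz on each sphere $S_t$, and summing in $t$, one reduces to pairing a uniform-in-$t$ spherical moment $\sup_{t\ne0}\sum_{m\in S_t}|\widehat{1_E}(m)|^{2a}$ — which is exactly what the Stein--Tomas extension estimate $R^\ast_S(2\to r)$, and its known strengthenings in small even dimensions, controls once $2a$ reaches the appropriate exponent — against $\sum_{m}|\widehat{1_E}(m)|^{2b}$, which Plancherel and the trivial bound $|\widehat{1_E}(m)|\le|E|$ control; the Kloosterman/Sali\'e bound $|\widehat{1_{S_t}}({\bf y})|\lesssim q^{(d-1)/2}$ for ${\bf y}\ne0$ is used to process the off-diagonal terms. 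Optimizing the split $(a,b)$ and tracking the exponents yields exactly $\tfrac{d+1}{2}-\tfrac1{6d+2}$, but only in those dimensions where the scheme genuinely improves on the trivial exponent $\tfrac{d+1}{2}$: for $k=3$ that is $d=4$ and $d=6$ (part (1)), while the extra factor $\widehat{1_E}(m)$ available when $k=4$ makes the scheme gain for every even $d\ge8$ (part (2)).

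For part (3), where $k=3$ and $d\ge8$ lie outside that favourable range, I would instead interpolate dyadically: decompose $\mathbb F_q^d\setminus\{0\}$ into the level sets $\{\,|\widehat{1_E}(m)|\sim\lambda\,\}$, bound each level's contribution to $\mathcal G$ by combining the trivial estimate with the level-set form of the sphere restriction estimate, optimize in $\lambda$, and sum the resulting geometric series over the $O(\log q)$ dyadic scales; this produces the weaker exponent $\tfrac{d+1}{2}-\tfrac1{9d-18}$, the $q^{\varepsilon}$ being the price of the logarithmically many scales. The main obstacle throughout is the generic term $\mathcal G$: in the displayed identity the main term and several of the error terms are all of size $\asymp q^{d-1}|E|^{2k}$, so nothing can be wasted, and the sphere restriction input must be sharp enough — in exactly the listed dimensions — for the residual to fall strictly below $|E|^{2k}/q$. (Eliminating the $\varepsilon$ from part (3), one of the aims of this paper, comes down to replacing the dyadic pigeonholing by a single-scale argument.)
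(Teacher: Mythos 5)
Your skeleton (second moment, the exact identity for $\sum_t\nu(t)^2$, sphere restriction/extension input) is the paper's route, and your displayed identity is correct. The genuine gap is in your reduction: you keep $t=0$ and claim it suffices to prove $\sum_{t\in\mathbb F_q}\nu(t)^2\lesssim |E|^{2k}/q$, with the degenerate frequencies $\|m\|=\|m'\|=0$ handled ``directly'' by elementary counts. This cannot be done at the stated thresholds, because the claim itself fails there. Take $L\subset\mathbb F_q^d$ a maximal isotropic subspace of the form ($|L|=q^{d/2}$, every element of $L$ has norm $0$; such $L$ exists e.g.\ for $d=4$, and for $d=6$ when $-1$ is a square), let $A$ be an additive subgroup of a complement of $L$ with $|A|=M\approx q^{\frac12-\frac1{6d+2}}$ (available for suitable prime powers $q$), and set $E=\bigcup_{a\in A}(a+L)$, so $|E|=Mq^{d/2}$ meets the hypothesis of part (1). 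Whenever $a_1+a_2+a_3=0$ the sum of the three corresponding points lies in $L$ and has norm $0$, so $\nu_3(0)\ge M^2q^{3d/2}$, and hence $\nu_3(0)^2\big/\bigl(|E|^6/q\bigr)\gtrsim q/M^2\approx q^{1/(3d+1)}\to\infty$; the same construction with four-fold sums defeats the reduction for part (2). Moreover, even after excluding $t=0$, the $\|m\|=\|m'\|=0$ block cannot be disposed of by the tools you list: since $\widehat{S_0}$ decays only like $q^{-d/2}$ (the cone has no curvature gain), the bounds coming from $|S_0|$, $g(\mathbf 0)\le|E|^{k-1}$, the trivial bound $|\widehat{1_E}|\le|E|$ and the count of pairs of points of $E$ at distance $0$ give at best $q^{d}|E|^{2k-2}$ plus acceptable terms for that block, and $q^d|E|^{2k-2}\le|E|^{2k}/q$ forces $|E|\gtrsim q^{(d+1)/2}$ --- i.e.\ you lose exactly the margin $\frac1{6d+2}$ (resp.\ $\frac1{9d-18}$) that the theorem is about.

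The paper's fix is structural rather than a sharper direct estimate: run Cauchy--Schwarz over $t\ne 0$ only, show $\nu_k(0)\le\frac13|E|^k$ so the numerator $(|E|^k-\nu_k(0))^2$ survives (Claim 3.2), and never estimate the zero-sphere Fourier block at all; instead one shows (Claim 3.4) that $q^{2dk-d}\bigl|\sum_{m\in S_0}\widehat{E}(m)^k\bigr|^2$ equals $\nu_k(0)^2$ up to an error $O(|E|^{2k}/q)$, so subtracting the excluded diagonal term $\nu_k(0)^2$ cancels it exactly where direct bounds fail. Your treatment of the generic term (Cauchy--Schwarz on each sphere $S_r$, $r\ne0$, fed by the Iosevich--Koh $L^4$ extension theorem and interpolation, with the $q^{-(d+1)/2}$ decay of $\widehat{S_t}$ for the $L^2$ input) and your dyadic scheme for part (3), which produces the $\varepsilon$, are in the spirit of the paper's Lemmas 4.2, 5.1--5.5 and Section 6 and could plausibly be made to recover the exponents; but until the $t=0$/isotropic-cone issue is repaired along the above lines, the sufficient condition you reduce everything to is false for sets below $q^{(d+1)/2}$, and the argument does not close.
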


The purpose of this paper is to generalize Theorem \ref{Thm1.3}.
In particular, in the general setting, we improve the third conclusion of Theorem \ref{Thm1.3} by removing the $\varepsilon>0$ in the exponent.\\

The generalized Erd\H os-Falconer distance problem has been recently studied by considering the distances between any two sets $E_1, E_2\subset \mathbb F_q^d$ (see, for example, \cite{HLS10, KoS12, KS12, KS13, Sh06, V08, V13}).
Given two sets $E_1, E_2 \subset \mathbb F_q^d,$ the generalized  distance set $\Delta_2(E_1, E_2)$ is defined by
$$ \Delta_2(E_1, E_2)=\left\{ \|{\bf x}^1-{\bf x}^2\|\in \mathbb F_q: {\bf x}^1\in E_1, ~{\bf x}^2\in E_2\right\}.$$
The generalized Erd\H os-Falconer distance problem is to determine the smallest exponent $\gamma_2>0$ such that if $E_1, E_2\subset \mathbb F_q^d$ with $|E_1||E_2|\ge C q^{\gamma_2}$ for a sufficiently large constant $C>0$, then $|\Delta_2(E_1, E_2)|\ge c q$ for some $0<c\le 1.$
From the Erd\H os-Falconer distance conjecture, it is natural to conjecture that the smallest exponent $\gamma_2$ is $d+1$ for odd dimension $d\ge 3$ and $d$ for even dimension $d\ge 2.$ Shparlinski \cite{Sh06} obtained the exponent $d+1$ for all dimensions $d\ge 2.$ Thus, the generalized Erd\H os-Falconer distance conjecture was established in odd dimensions. On the other hand, the conjectured exponent $d$ in even dimensions has not been obtained. The currently best known result  is the exponent $d+1$ for even dimensions $d$ except for two dimensions. 
In dimension two the best known result is the exponent $8/3$ due to Koh and Shen \cite{KoS12}.\\

We now consider a problem which extends both the generalized Erd\H os-Falconer distance problem and the $k$-resultant modulus set problem. For $k$ sets $E_j\subset \mathbb F_q^d, j=1,2,\ldots,k,$ we define the generalized $k$-resultant set $\Delta_k(E_1, \ldots, E_k)$ as
$$ \Delta_k(E_1, \ldots, E_k)=\left\{\|{\bf x}^1+ {\bf x}^2+ \cdots + {\bf x}^k\|\in \mathbb F_q: {\bf x}^j\in E_j, ~j=1,2,\ldots,k \right\}.$$

\begin{problem}\label{Pro1.4} Let $k\ge 2$ be an integer. Suppose that  $E_j \subset\mathbb F_q^d, j=1,2,\ldots,k.$ Determine the smallest exponent $\gamma_k >0$ such that if $\prod\limits_{j=1}^k |E_j| \ge C q^{\gamma_k}$  for a sufficiently large constant $C>0$, then $|\Delta_k(E_1,E_2, \ldots, E_k)|\ge c q$ for some $0<c\le 1.$
\end{problem}
We call this problem the generalized $k$-resultant modulus problem.
As in the $k$-resultant modulus problem, we are only interested in studying this problem in even dimensions $d\ge 2.$ If $q=p^2$ for some odd prime $p$, then $\mathbb F_q$ contains the subfield $\mathbb F_p$. In this case, if the dimension $d$ is even and $E_j=\mathbb F_p^d$ for all $j=1,2,\ldots,k,$ then $\prod\limits_{j=1}^k|E_j|=q^{dk/2}$ and $|\Delta_k(E_1,E_2,\ldots, E_k)|=|\mathbb F_p|=p=\sqrt{q}.$ This example proposes the following conjecture.
\begin{conjecture} Suppose that $d\ge 2$ is even and $k\ge 2$ is an integer.
Then the smallest exponent $\gamma_k$ in Problem \ref{Pro1.4} must be $\frac{kd}{2}.$
\end{conjecture}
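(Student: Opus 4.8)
\noindent\textit{Proof proposal.} The conjecture is the conjunction of the lower bound $\gamma_k\ge \tfrac{kd}{2}$ and the matching upper bound, and essentially all of the content is in the latter. The lower bound is witnessed by the example recorded just before the statement: for $q=p^2$ with $p$ an odd prime and $d$ even, the sets $E_1=\cdots=E_k=\mathbb F_p^d$ have $\prod_j|E_j|=q^{kd/2}$ while $\Delta_k(E_1,\dots,E_k)=\{\,y_1^2+\cdots+y_d^2:{\bf y}\in\mathbb F_p^d\,\}=\mathbb F_p$, so $|\Delta_k|=\sqrt q$; letting $p\to\infty$ shows that no exponent $\gamma<\tfrac{kd}{2}$ can serve in Problem \ref{Pro1.4}, i.e.\ $\gamma_k\ge\tfrac{kd}{2}$. (A second obstruction, available whenever the form $y_1^2+\cdots+y_d^2$ has Witt index $\tfrac d2$ --- automatic for $d\equiv 0\pmod{4}$, and holding for $d\equiv 2\pmod{4}$ when $-1$ is a square in $\mathbb F_q$ --- is to take every $E_j$ equal to a fixed totally isotropic $\tfrac d2$--dimensional subspace, for which $\Delta_k(E_1,\dots,E_k)=\{0\}$; this second family will reappear below as one of the configurations an eventual inverse theorem must recognize.)

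For the upper bound, let $\nu(t)$ count the $k$--tuples $({\bf x}^1,\dots,{\bf x}^k)\in E_1\times\cdots\times E_k$ with $\|{\bf x}^1+\cdots+{\bf x}^k\|=t$, so that $\sum_t\nu(t)=\prod_j|E_j|$ and, by Cauchy--Schwarz on the support of $\nu$, $|\Delta_k(E_1,\dots,E_k)|\ge(\prod_j|E_j|)^2/\sum_t\nu(t)^2$. Fixing a nontrivial additive character $\chi$ of $\mathbb F_q$ and expanding $\mathbf{1}[a=b]=q^{-1}\sum_{s\in\mathbb F_q}\chi(s(a-b))$ yields
$$\sum_{t\in\mathbb F_q}\nu(t)^2=\frac1q\Big(\prod_j|E_j|\Big)^2+\frac1q\sum_{s\in\mathbb F_q^\times}|T_k(s)|^2,\qquad T_k(s):=\sum_{{\bf x}^j\in E_j}\chi\big(s\,\|{\bf x}^1+\cdots+{\bf x}^k\|\big),$$
so the upper bound reduces exactly to the averaged cancellation estimate
$$\sum_{s\in\mathbb F_q^\times}|T_k(s)|^2\ \lesssim\ \Big(\prod_j|E_j|\Big)^2\qquad\text{whenever}\qquad\prod_j|E_j|\ \ge\ C\,q^{kd/2}$$
with $C$ a sufficiently large absolute constant. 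Proving this inequality is, in my view, the main obstacle.

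What the standard machinery gives is only a \emph{pointwise} estimate for $T_k(s)$: separating the ${\bf x}^1$--dependence one writes $T_k(s)=\sum_{{\bf x}}g({\bf x})\,\widehat h(-2s{\bf x})$ with $g=E_1\cdot\chi(s\|\cdot\|)$ and $h=(E_2*\cdots*E_k)\cdot\chi(s\|\cdot\|)$, and Cauchy--Schwarz with Plancherel (equivalently, the Gauss-sum evaluation $\big|\sum_{\bf z}\chi(s\|{\bf z}\|+{\bf w}\cdot{\bf z})\big|=q^{d/2}$ for $s\ne0$) gives $|T_k(s)|\lesssim q^{d/2}\big(|E_1|\,\mathcal E\big)^{1/2}$ with $\mathcal E:=\sum_{\bf y}(E_2*\cdots*E_k)({\bf y})^2$, together with its permuted variants. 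Squaring and summing over the $q-1$ nonzero $s$ recovers only exponents of the shape appearing in Theorem \ref{Thm1.3}, and in the worst regime (where $\mathcal E$ has size $(\prod_{j\ge 2}|E_j|)^2q^{-d/2}$, realized by the subspace example) this stalls well above $q^{kd/2}$. The loss is structural: since $T_k(0)=\prod_j|E_j|$, the target forces the nonzero frequencies to contribute only $O\big(q^{-1}(\prod_j|E_j|)^2\big)$ \emph{on average over $s$}, i.e.\ genuine square-root cancellation in the $s$--average, which no pointwise bound can supply.

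The plan to overcome this would proceed in three stages. First, for $k=2$ the required inequality is precisely the even-dimensional Erd\H os--Falconer distance conjecture in its two-set form $\gamma_2=d$, which contains the classical conjecture $\beta=d/2$ and is open for every even $d\ge 4$; so the $k=2$ case --- and hence the conjecture in full --- cannot be obtained without new progress on that problem. Second, for $k\ge3$ one would exploit the convolution structure of $f=E_1*\cdots*E_k$: its Fourier transform $\widehat f=\prod_j\widehat{E_j}$ is forced to concentrate near $\mathbf{0}$ unless several of the $E_j$ are arithmetically rigid, and feeding this spread into the sharp finite-field $L^2\!\to\!L^r$ extension estimates for spheres in the even dimensions $d=4,6$ --- and iterating them when $k$ is large --- is exactly the mechanism behind Theorem \ref{Thm1.3} and drives the threshold down. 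Third, one would try to close the remaining gap up to $q^{kd/2}$ with a quantitative inverse theorem: show that if $\sum_{s\ne0}|T_k(s)|^2$ is as large as $q^{kd/2+1/2}$ then a positive proportion of \emph{every} $E_j$ must lie in a coset of a subfield of $\mathbb F_q$ or of a totally isotropic subspace --- the two extremizers identified above --- and dispose of those algebraically rigid cases directly. This last stage, converting the extremal-example heuristic into a genuine dichotomy, is where I expect the real difficulty to lie; short of a breakthrough on the even-dimensional distance problem itself, the realistic outcome of this program is to enlarge the set of pairs $(k,d)$ for which the odd-dimensional exponent $\tfrac{k(d+1)}{2}$ can be beaten, in the spirit of Theorem \ref{Thm1.3}.
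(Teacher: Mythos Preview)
The statement is a \emph{conjecture}; the paper does not prove it and offers no argument beyond the subfield example in the paragraph immediately preceding it. Your lower bound $\gamma_k\ge kd/2$ reproduces exactly that example and is correct, and your side remark about totally isotropic $\tfrac d2$--subspaces (hyperbolic when $d\equiv 0\pmod 4$, or when $d\equiv 2\pmod 4$ and $-1$ is a square) is also correct. On this half there is nothing further in the paper to compare against.

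For the upper bound you do not give a proof, and you say so: you reduce to the averaged estimate $\sum_{s\ne 0}|T_k(s)|^2\lesssim (\prod_j|E_j|)^2$, observe that already for $k=2$ this \emph{is} the two-set even-dimensional Erd\H os--Falconer conjecture $\gamma_2=d$ (open for every even $d\ge 4$), and then outline a program rather than an argument. That assessment is accurate and consistent with the paper, whose main results (Theorem~\ref{mainthm}) reach only thresholds of the form $\tfrac{d+1}{2}-\tfrac{1}{6d+2}$ and $\tfrac{d+1}{2}-\tfrac{1}{9d-18}$, well above the conjectured $d/2$. So there is no paper proof to diverge from: what you have written is not a proof of the conjecture but a correct explanation of why it \emph{is} a conjecture, together with a speculative plan of attack. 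The only caveat is one of labeling --- calling this a ``proof proposal'' overstates what the text delivers.
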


\subsection{Statement of the main result}
As mentioned before, the known results on the generalized Erd\H os-Falconer distance problem says that if $k=2,$ then  $2\le \gamma_k\le 8/3$ for $d=2$, and $d \le \gamma_k \le d+1 $ for even dimensions $d\ge 4,$ where $\gamma_k$ denotes the smallest exponent in Problem \ref{Pro1.4}. In this paper we study the generalized $k$-resultant modulus problem for $k\ge 3.$ Our main result is as follows.
\begin{theorem}\label{mainthm}
Let $k\ge 3$ be an integer and $E_j\subset \mathbb F_q^d$ for $j=1,2,\ldots,k. $ Assume that $C$ is a sufficiently large constant. Then the following statements hold:\\
\noindent{\it (1)} If  $d=4$ or $6,$ and
 $\prod\limits_{j=1}^3 |E_j| \ge C q^{3\left(\frac{d+1}{2} -\frac{1}{6d+2}\right)},$  then $|\Delta_3(E_1,E_2,E_3)|\gtrsim q.$\\
\noindent{\it (2)} If $d\ge 8$ is even and $\prod\limits_{j=1}^4 |E_j| \ge C q^{4\left(\frac{d+1}{2} -\frac{1}{6d+2}\right)},$ then $|\Delta_4(E_1,E_2,E_3, E_4)|\gtrsim q.$\\
\noindent{\it (3)} If $d\geq 8$ is even and $\prod\limits_{j=1}^3 |E_j| \ge C q^{3\left(\frac{d+1}{2} -\frac{1}{9d-18}\right)},$
then   $|\Delta_3(E_1,E_2,E_3)|\gtrsim q.$
\end{theorem}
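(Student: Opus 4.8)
All three parts run along the Fourier-analytic lines of \cite{CKP16}, now keeping the sets $E_1,\dots,E_k$ separate; the one genuinely new point --- which is also what removes the $\varepsilon$ of Theorem~\ref{Thm1.3}(3) --- lives in the $\Delta_3$ estimate and is flagged below.

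\emph{Step 1 (reduction).} For $r\in\mathbb F_q$ let $\nu(r)=\#\{({\bf x}^1,\dots,{\bf x}^k)\in E_1\times\cdots\times E_k:\ \|{\bf x}^1+\cdots+{\bf x}^k\|=r\}$. Since $\sum_r\nu(r)=\prod_{j}|E_j|$ and $\nu$ is supported on $\Delta_k(E_1,\dots,E_k)$, Cauchy--Schwarz gives
$$|\Delta_k(E_1,\dots,E_k)|\ \ge\ \frac{\bigl(\prod_{j=1}^{k}|E_j|\bigr)^2}{\sum_{r\in\mathbb F_q}\nu(r)^2}\,,$$
so it suffices to show $\sum_r\nu(r)^2\lesssim q^{-1}\bigl(\prod_j|E_j|\bigr)^2$ under the hypothesis of each part (this is also sharp, since $\sum_r\nu(r)^2\ge q^{-1}(\prod_j|E_j|)^2$ automatically, so $\gtrsim q$ is the best such a bound delivers). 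Put $f=\mathbf{1}_{E_1}\ast\cdots\ast\mathbf{1}_{E_k}$, so that $\nu(r)=\sum_{\|{\bf y}\|=r}f({\bf y})$ and $\widehat f=\prod_{j=1}^{k}\widehat{\mathbf{1}_{E_j}}$. Writing $\mathbf{1}\{\|{\bf y}\|=r\}$ and $\mathbf{1}\{\|{\bf y}\|=\|{\bf y}'\|\}$ as additive character sums and evaluating the Gauss sums over the quadratic form $\|\cdot\|$ --- it is here that $d$ even is used, since then these sums contribute a clean power $q^{d/2}$; in odd dimensions one cannot beat the exponent $(d+1)/2$ per set, as $\Delta_k$ is then just the distance set by the example of \cite{CKP17} --- one gets $\sum_r\nu(r)^2=q^{-1}\bigl(\prod_j|E_j|\bigr)^2+\mathcal E$, where, after isolating the frequency $\xi=0$ and the isotropic cone $\{\xi:\|\xi\|=0\}$ (whose contributions are lower order and are disposed of at the end), $\mathcal E$ is bounded by $q^{-d}$ times
$$\sum_{s\in\mathbb F_q}\ \Bigl|\ \sum_{\substack{\xi\neq 0\\ \|\xi\|=s}}\ \prod_{j=1}^{k}\widehat{\mathbf{1}_{E_j}}(\xi)\ \Bigr|^2 .$$
Thus everything reduces to bounding this spherical sum by $\lesssim q^{d-1}\bigl(\prod_j|E_j|\bigr)^2$.

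\emph{Step 2 (the spherical sum).} This is the technical heart, and it is precisely the presence of $k\ge 3$ factors $\widehat{\mathbf{1}_{E_j}}$ --- rather than the $k=2$ of the distance problem --- that gives room to beat $(d+1)/2$. Group the indices into two blocks: for $k=4$ the balanced grouping $\{E_1,E_2\}\cup\{E_3,E_4\}$, for $k=3$ the grouping $\{E_i,E_j\}\cup\{E_\ell\}$; write $g_A,g_B$ for the two corresponding convolutions, so the inner sum is $\sum_{\|\xi\|=s}\widehat{g_A}(\xi)\widehat{g_B}(\xi)$. Estimate this, via Cauchy--Schwarz and the restriction/extension estimate for the sphere in $\mathbb F_q^d$ --- the same kind of input that produces the exponent $4/3$ for the circle in $\mathbb F_q^2$ --- controlling the convolutions through the elementary bounds $\|\mathbf{1}_{E_i}\ast\mathbf{1}_{E_j}\|_{\ell^1}=|E_i||E_j|$ and $\|\mathbf{1}_{E_i}\ast\mathbf{1}_{E_j}\|_{\ell^\infty}\le\min(|E_i|,|E_j|)$. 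For even $d\ge 8$ one invokes the endpoint (Stein--Tomas-type) $L^2$ restriction estimate for the sphere; for $d=4,6$ one invokes instead the stronger sphere restriction estimate available in those two dimensions, and this is exactly what improves the savings $\tfrac1{9d-18}$ of part (3) to $\tfrac1{6d+2}$ in part (1). Summing over $s\in\mathbb F_q$ and optimizing over which set is placed in which block then turns the required bound on the spherical sum into the stated product thresholds for $\prod_j|E_j|$.

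\emph{Step 3 (where the work is; removal of $\varepsilon$).} Parts (1) and (2) are, once Step 2 is set up, the expected $k$-set versions of Theorem~\ref{Thm1.3}(1),(2): the scheme nowhere used that the $E_j$ coincide, so carrying them separately and choosing the block assignment well is bookkeeping. The substantive case is part (3). When $k=3$ the two blocks cannot both be single sets, so one of them is a genuine two-fold convolution $\mathbf{1}_{E_i}\ast\mathbf{1}_{E_j}$, not an indicator, and the difficulty is to estimate the spherical sum efficiently in its presence. In \cite{CKP16} this was done by decomposing $\mathbf{1}_{E_i}\ast\mathbf{1}_{E_j}$ into its $O(\log q)$ dyadic level sets and applying a restriction estimate stated for characteristic functions scale by scale; summing the scales costs a factor $\log q\le C_\varepsilon q^{\varepsilon}$, which is the source of the $\varepsilon$ in Theorem~\ref{Thm1.3}(3). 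Here one instead feeds $\mathbf{1}_{E_i}\ast\mathbf{1}_{E_j}$ directly into the operator form of the restriction estimate and bounds it by means of the two elementary norm estimates above alone --- no dyadic pigeonholing, hence no logarithm, and the exponent $\tfrac1{9d-18}$ is left intact. It then remains to check that the isotropic-cone term, the lower-order corrections to the sizes of the level sets $\{\|{\bf y}\|=r\}$, and the cross term between the main term and the error in $\mathcal E$ are each $\lesssim q^{-1}\bigl(\prod_j|E_j|\bigr)^2$ under each stated hypothesis, which is routine.
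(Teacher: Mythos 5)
Your Step 1 matches the paper's reduction (Theorem \ref{FormulaD} and its Claims), but Step 2 --- the only place where the stated thresholds could come from --- is a genuine gap, both because it is never made quantitative and because the tools you name there do not exist or do not suffice in the form you use them. First, the inputs are misattributed: there is no ``stronger sphere restriction estimate available in $d=4,6$,'' and the Stein--Tomas-type $L^2$ estimate alone cannot beat the exponent $\frac{d+1}{2}$ for $d\ge 8$. The only non-trivial input available in all even dimensions $d\ge 4$ is the Iosevich--Koh $L^4$ extension estimate (Lemma \ref{exkoh}), which is the same estimate for $d=4,6$ and $d\ge 8$; the dichotomy between parts (1) and (3) comes purely from interpolation numerology (its dual exponent $r_0=\frac{12d-8}{3d+4}$ lies above $3$ exactly when $d\ge 8$), not from a different theorem. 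Moreover that estimate is only known for characteristic functions $F\subset S_t$, so it cannot simply be ``fed the operator form'' with the general function $\mathbf{1}_{E_i}\ast\mathbf{1}_{E_j}$; upgrading it to general functions is exactly the Lorentz-space step (Lemmas \ref{Lp1}--\ref{Insook1}), and even then one only gets a weak-type bound on the sphere side. Second, your block decomposition with $g_A=\mathbf{1}_{E_i}\ast\mathbf{1}_{E_j}$, Cauchy--Schwarz on each sphere, and the elementary bounds $\|g_A\|_{\ell^1}=|E_i||E_j|$, $\|g_A\|_{\ell^\infty}\le\min(|E_i|,|E_j|)$ is quantifiably too lossy: e.g.\ for $k=4$, $|E_j|=N$, bounding $\sum_{\|\xi\|=s}|\widehat{g_A}\widehat{g_B}|$ by $L^2(S_s)$ norms controlled through $\|g_A\|_{\ell^{4/3}}\le\|g_A\|_{\ell^1}^{3/4}\|g_A\|_{\ell^\infty}^{1/4}$ and summing (or pulling out a maximum in $s$ and using Plancherel on the other factor, as in Lemma \ref{lem2.1}) forces a hypothesis of the shape $N\gtrsim q^{2d/3}$, far weaker than $N\gtrsim q^{\frac{d+1}{2}-\frac{1}{6d+2}}$. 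Discarding the individual sets in favour of $\ell^1$/$\ell^\infty$ information about their convolution throws away precisely the structure the argument must exploit.

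The paper never forms convolutions at all: after Theorem \ref{FormulaD} and H\"older, everything reduces to $L^3(S_r)$ or $L^4(S_r)$ restriction bounds for the transforms $\widetilde{E_j}$ of the individual indicator functions (Lemma \ref{Formulalem}). These are produced by dualizing Lemma \ref{exkoh} through Lorentz spaces into the weak-type estimate of Lemma \ref{Insook1}, and then interpolating two restricted weak-type estimates via Theorem \ref{RTI} --- against the trivial $L^{\infty}$ bound for parts (1),(2), and against the $L^2(S_t)$ bound of Lemma \ref{minj1} (valid when $|E|\ge q^{\frac{d-1}{2}}$, with the small-set case of part (3) handled separately by Shparlinski's two-set theorem \cite{Sh06}). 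The removal of the $\varepsilon$ from Theorem \ref{Thm1.3}(3) is exactly the observation that for characteristic functions this real interpolation between two weak-type endpoints yields a strong-type bound with no logarithmic loss; it has nothing to do with avoiding a dyadic decomposition of $\mathbf{1}_{E_i}\ast\mathbf{1}_{E_j}$, so your Step 3 both misdiagnoses the source of the $\varepsilon$ and omits the interpolation lemma and the large-set/small-set dichotomy that the actual proof of part (3) requires.
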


Taking $E_j=E\subset \mathbb F_q^d$ for all $j=1,2,\ldots,k$, the first and second conclusions of Theorem  \ref{Thm1.3} follow immediately from $(1),(2)$ of Theorem \ref{mainthm}, respectively. Moreover, the third conclusion of Theorem \ref{mainthm} implies that the $\varepsilon$ in the statement $(3)$ of Theorem \ref{Thm1.3}  is not necessary. \\

Theorem \ref{mainthm} also implies the following result.
\begin{corollary} For any integer $k\ge 4,$ let $E_j\subset \mathbb F_q^d$ for $j=1,2,\ldots,k. $ Assume that  $C>0$ is a sufficiently large constant.
Then if $d\ge 4$ is even and  $\prod\limits_{j=1}^k |E_j| \ge C q^{k\left(\frac{d+1}{2} -\frac{1}{6d+2}\right)},$  we have $|\Delta_k(E_1,E_2,\ldots, E_k)|\gtrsim q.$
\end{corollary}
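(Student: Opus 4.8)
The plan is to derive this from Theorem~\ref{mainthm} by collapsing the $k$ sets into a triple (for $d\in\{4,6\}$, where part~(1) gives a $\Delta_3$ bound) or a quadruple (for even $d\ge 8$, where part~(2) gives a $\Delta_4$ bound), and then applying the relevant part of the theorem. Write $\alpha=\frac{d+1}{2}-\frac{1}{6d+2}$ and let $C_0$ be the constant called ``$C$'' in Theorem~\ref{mainthm}; note that $\alpha<d$ whenever $d\ge 4$. Since $\Delta_k(E_1,\dots,E_k)$ is symmetric in its arguments, I would relabel so that $|E_1|\ge|E_2|\ge\cdots\ge|E_k|$.

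The collapse works as follows. For any partition of $\{1,\dots,k\}$ into nonempty blocks $S_1,\dots,S_m$, setting $F_i=\sum_{j\in S_i}E_j$ gives $\Delta_m(F_1,\dots,F_m)=\Delta_k(E_1,\dots,E_k)$, since a sum of sumsets recovers the total sum; moreover $|F_i|\ge\max_{j\in S_i}|E_j|$, a nonempty sumset being no smaller than any single summand. Taking the largest three (resp.\ four) sets as singleton blocks and sweeping the remaining sets into the block of $E_1$ produces $F_1=E_1+E_4+\cdots+E_k,\ F_2=E_2,\ F_3=E_3$ with $|F_1||F_2||F_3|\ge|E_1||E_2||E_3|$ (resp.\ $F_1=E_1+E_5+\cdots+E_k,\ F_2=E_2,\ F_3=E_3,\ F_4=E_4$ with $\prod_i|F_i|\ge|E_1||E_2||E_3||E_4|$).

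It therefore suffices to prove $|E_1||E_2||E_3|\ge C_0 q^{3\alpha}$ for $d\in\{4,6\}$, and $|E_1||E_2||E_3||E_4|\ge C_0 q^{4\alpha}$ for even $d\ge 8$. The naive estimate $|E_1|\cdots|E_4|\ge(\prod_{j=1}^k|E_j|)\,q^{-d(k-4)}$, got from $|E_j|\le q^d$, is useless, as it would need $\alpha\ge d$. The key step, and essentially the whole point, is to reverse the logic: if $|E_1||E_2||E_3||E_4|<C_0 q^{4\alpha}$, then, the $|E_j|$ being in decreasing order, $|E_4|\le(|E_1||E_2||E_3||E_4|)^{1/4}<C_0^{1/4}q^{\alpha}$, whence $|E_j|\le|E_4|<C_0^{1/4}q^{\alpha}$ for all $j\ge 4$, and multiplying over all $j$,
\[
\prod_{j=1}^k|E_j|=(|E_1||E_2||E_3||E_4|)\prod_{j=5}^k|E_j|<C_0\,q^{4\alpha}\,(C_0^{1/4}q^{\alpha})^{k-4}=C_0^{k/4}\,q^{k\alpha}.
\]
Taking the ``sufficiently large'' constant $C$ of the hypothesis to satisfy $C\ge C_0^{k/4}$ (legitimate, since $k$ and $d$ are fixed) contradicts this, forcing $|E_1||E_2||E_3||E_4|\ge C_0 q^{4\alpha}$; for $d\in\{4,6\}$ the identical argument with ``$3$'' for ``$4$'' and $C\ge C_0^{k/3}$ gives $|E_1||E_2||E_3|\ge C_0 q^{3\alpha}$.

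Finally, feeding the triple $(F_1,F_2,F_3)$ into part~(1) of Theorem~\ref{mainthm} when $d\in\{4,6\}$, and the quadruple $(F_1,F_2,F_3,F_4)$ into part~(2) when $d\ge 8$ is even, yields $|\Delta_k(E_1,\dots,E_k)|\gtrsim q$. (When $k=4$ there are no surplus summands; if in addition $d\ge 8$ the reduction is trivial and the corollary is exactly part~(2) of Theorem~\ref{mainthm}.) The only point requiring care — and the only thing I would call an obstacle, albeit a mild one — is the constant bookkeeping: one must ensure that a single $C=C(k,d)$, independent of $q$, simultaneously wins the contraposition above and meets the hypothesis of Theorem~\ref{mainthm}, which is precisely what the displayed inequality arranges.
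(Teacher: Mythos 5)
Your proposal is correct, but it takes a genuinely different route from the paper's. The paper orders the sets so that $|E_1|\ge\cdots\ge|E_k|$, observes (exactly your size bookkeeping, phrased via $\prod_{j=1}^{k-1}|E_j|\ge\bigl(\prod_{j=1}^{k}|E_j|\bigr)^{(k-1)/k}$ rather than your contraposition) that discarding the smallest set preserves the hypothesis, and then inducts down to $k=3$ or $k=4$ using the asserted monotonicity $|\Delta_{k-1}(E_1,\ldots,E_{k-1})|\le|\Delta_k(E_1,\ldots,E_k)|$, after which parts (1),(2) of Theorem \ref{mainthm} finish. You instead collapse the surplus sets into one block by forming the sumset $F_1=E_1+E_4+\cdots+E_k$ (resp.\ $E_1+E_5+\cdots+E_k$), which preserves the resultant modulus set exactly, since $F_1+F_2+F_3=E_1+\cdots+E_k$ as sumsets, and you only need $|A+B|\ge\max\{|A|,|B|\}$ (all sets are nonempty under the hypothesis, so this is legitimate). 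What each buys: the paper's induction is a two-line argument, but the monotonicity step it invokes is stated without proof and is not a general fact --- $\|\cdot\|$ is not translation-invariant, so adding a further summand set can in principle shrink the set of attained norms (e.g.\ if the earlier sumset is a translate of a sphere and the last set is a single point recentering it). Your sumset-collapse sidesteps this issue entirely: the identity $\Delta_m(F_1,\ldots,F_m)=\Delta_k(E_1,\ldots,E_k)$ is exact, so your argument is self-contained and, if anything, more rigorous than the paper's, at the cost of slightly heavier notation and the explicit constant bookkeeping $C\ge C_0^{k/3}$ or $C_0^{k/4}$, which you handle correctly.
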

\begin{proof} Without loss of generality, we may assume that
$ |E_1|\ge |E_2|\ge \cdots \ge |E_k|.$
Notice that if $\prod\limits_{j=1}^k |E_j| \ge C q^{k\left(\frac{d+1}{2} -\frac{1}{6d+2}\right)},$ then  $\prod\limits_{j=1}^{k-1} |E_j| \gtrsim q^{(k-1)\left(\frac{d+1}{2} -\frac{1}{6d+2}\right)}.$ Since $|\Delta_{k-1}(E_1, \ldots, E_{k-1})| \le |\Delta_k(E_1, \ldots, E_k)|,$
the statement follows by induction argument with conclusions $(1),(2)$ of Theorem \ref{mainthm}.
\end{proof}

\section{Discrete Fourier analysis}
We shall use the discrete Fourier analysis to deduce the result of our main theorem, Theorem \ref{mainthm}. In this section, we recall notation and basic concept  in the discrete Fourier analysis. Throughout this paper, we shall denote by $\chi$ a nontrivial additive character of $\mathbb F_q.$
Since our result is independent of the choice of the character $\chi,$ we assume that $\chi$ is always a fixed nontrivial additive character of $\mathbb F_q.$  The orthogonality relation of $\chi$ states that
$$\sum_{\textbf{x}\in \mathbb F_q^d} \chi(\textbf{m}\cdot \textbf{x})=\left\{ \begin{array}{ll} 0  \quad&\mbox{if}~~ \textbf{m}\neq (0,\dots,0)\\
q^d  \quad &\mbox{if} ~~\textbf{m}= (0,\dots,0), \end{array}\right.$$
where  $\textbf{m}\cdot \textbf{x} :=\sum_{j=1}^d {\bf m}_j{\bf x}_j$ for ${\bf m}=({\bf m}_1, \ldots, {\bf m}_d), ~ {\bf x}=({\bf x}_1, \ldots, {\bf x}_d) \in \mathbb F_q^d.$  Given a function $g: \mathbb F_q^d \to \mathbb C,$ we shall denote by $\widetilde{g}$  the Fourier transform of $g$ which is defined by
\begin{equation}\label{gtile}
\widetilde{g}(\textbf{x})= \sum_{\textbf{m}\in \mathbb F_q^d} g(\textbf{m}) \,\chi(-\textbf{x}\cdot \textbf{m})\quad \mbox{for}~~\textbf{x}\in \mathbb F_q^d.
\end{equation}
On the other hand,  we shall denote by $\widehat{f}$ the {\bf normalized}  Fourier transform of the function $f: \mathbb F_q^d \to \mathbb C.$ Namely we define that
$$ \widehat{f}(\textbf{m})= \frac{1}{q^d} \sum_{\textbf{x}\in \mathbb F_q^d} f(\textbf{x}) \,\chi(-\textbf{x}\cdot \textbf{m})\quad \mbox{for}~~\textbf{m}\in \mathbb F_q^d.$$
In particular, if ${\bf m}=(0,\ldots,0)$ and we take
$f$ as an indicator function of a set $E\subset \mathbb F_q^d,$ then we see that
$$ \widehat{E}(0,\ldots,0)=\frac{|E|}{q^d}.$$
Here, and throughout this paper, we write $E({\bf x})$ for the indicator function $1_{E}({\bf x})$ of a set $E\subset \mathbb F_q^d.$
We define the {\bf normalized} inverse Fourier transform of $f$, denoted by  $ f^{\vee},$ as $f^{\vee}(\textbf{m})=\widehat{f}(-\textbf{m})$ for ${\bf m}\in \mathbb F_q^d.$ It is not hard to see that $\widetilde{ (f^{\vee})}(\textbf{x})=f(\textbf{x})$ for $\textbf{x}\in \mathbb F_q^d.$ Hence, we obtain the Fourier inversion theorem:
$$f(\textbf{x})= \sum_{\textbf{m}\in \mathbb F_q^d}  \widehat{f}(\textbf{m}) \,\chi(\textbf{m}\cdot \textbf{x}) \quad\mbox{for}~~\textbf{x}\in \mathbb F_q^d.$$
Using the orthogonality relation of the character $\chi,$ we see that
$$  \sum_{\textbf{m}\in \mathbb F_q^d} |\widehat{f}(\textbf{m})|^2 = \frac{1}{q^d} \sum_{\textbf{x}\in \mathbb F_q^d} |f(\textbf{x})|^2. $$
We shall call this formula the Plancherel theorem. Notice that if we take
$f$ as an indicator function of a set $E\subset \mathbb F_q^d,$ then the Plancherel theorem yields that
$$ \sum_{{\bf m}\in \mathbb F_q^d} |\widehat{E}({\bf m})|^2 = \frac{|E|}{q^d}.$$
Recall from H\"{o}lder's inequality that if $f_1, f_2: \mathbb F_q^d \to \mathbb C,$ then we have
$$ \sum_{{\bf m}\in \mathbb F_q^d} |f_1({\bf m})||f_2({\bf m})| \le \left( \sum_{{\bf m}\in \mathbb F_q^d} |f_1({\bf m})|^{p_1}\right)^{\frac{1}{p_1}} \left( \sum_{{\bf m}\in \mathbb F_q^d} |f_2({\bf m})|^{p_2}\right)^{\frac{1}{p_2}},$$
where $1< p_1, p_2 <\infty$ and $1/p_1 + 1/p_2=1.$ Applying  H\"{o}lder's inequality repeatedly, we see that if $f_i: D\subset \mathbb F_q^d \to \mathbb C$  and $1\le p_i <\infty$ for $i=1,2,\ldots, k$ with $\sum\limits_{i=1}^k \frac{1}{p_i} =1,$ then
\begin{equation}\label{GH} \sum_{{\bf m}\in D} \left(\prod_{i=1}^k |f_i({\bf m})|\right) \le \prod_{i=1}^k \left(\sum_{{\bf m}\in D}  |f_i({\bf m})|^{p_i}\right)^{\frac{1}{p_i}}.\end{equation}
We refer to this formula as the generalized H\"{o}lder's inequality.

\begin{lemma} \label{lem2.1} Let $k\ge 2$ be an integer. If $ E_j\subset \mathbb F_q^d$ for all $j=1,2,\ldots, k,$ then we have
$$ \sum_{{\bf m}\in \mathbb F_q^d} \left( \prod_{j=1}^k |\widehat{E_j}({\bf m})|\right)
\le q^{-dk+d} \left( \prod_{j=1}^k |E_j|\right)^{\frac{k-1}{k}}.$$
\end{lemma}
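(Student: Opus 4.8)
The plan is to apply the generalized Hölder inequality \eqref{GH} to the product $\prod_{j=1}^k |\widehat{E_j}({\bf m})|$ with the symmetric choice of exponents $p_i = k$ for all $i = 1, 2, \ldots, k$, which indeed satisfies $\sum_{i=1}^k 1/p_i = 1$. This reduces the left-hand side to $\prod_{j=1}^k \left( \sum_{{\bf m}\in \mathbb F_q^d} |\widehat{E_j}({\bf m})|^k \right)^{1/k}$, so the problem is now to bound each single factor $\sum_{{\bf m}} |\widehat{E_j}({\bf m})|^k$ individually. Thus it suffices to prove, for any fixed set $E \subset \mathbb F_q^d$, that $\sum_{{\bf m}\in \mathbb F_q^d} |\widehat{E}({\bf m})|^k \le q^{-dk+d}|E|^{k-1}$; taking the product over $j$ of the $1/k$-th powers of this bound then yields exactly the claimed estimate $q^{-dk+d}\left(\prod_{j=1}^k |E_j|\right)^{(k-1)/k}$.

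To establish the single-set bound, I would interpolate between two endpoints: the trivial pointwise bound $|\widehat{E}({\bf m})| \le q^{-d}\sum_{{\bf x}} E({\bf x}) = q^{-d}|E|$ for every ${\bf m}$ (which gives the $L^\infty$ control), and the Plancherel identity $\sum_{{\bf m}} |\widehat{E}({\bf m})|^2 = q^{-d}|E|$ (the $L^2$ control). Writing $|\widehat{E}({\bf m})|^k = |\widehat{E}({\bf m})|^{k-2}\cdot |\widehat{E}({\bf m})|^2$ and pulling out the supremum on the first factor,
\begin{equation*}
\sum_{{\bf m}\in \mathbb F_q^d} |\widehat{E}({\bf m})|^k \le \left(\max_{{\bf m}} |\widehat{E}({\bf m})|\right)^{k-2} \sum_{{\bf m}\in \mathbb F_q^d} |\widehat{E}({\bf m})|^2 \le \left(q^{-d}|E|\right)^{k-2}\cdot q^{-d}|E| = q^{-dk+d}|E|^{k-1},
\end{equation*}
which is precisely what is needed. (For $k=2$ the factor $(k-2)$ is zero and the argument degenerates to Plancherel alone, still giving equality, so the statement holds in that case too.)

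I do not expect any real obstacle here: the lemma is a purely formal consequence of Hölder, Plancherel, and the trivial $L^\infty$ bound on the Fourier transform of an indicator function. The only point requiring a moment's care is the bookkeeping in the exponents — verifying that $\sum 1/p_i = 1$ with $p_i = k$, and that combining $k-2$ copies of the $L^\infty$ bound with one copy of the $L^2$ bound reproduces the exponent $-dk+d$ on $q$ and $k-1$ on $|E|$ — but this is routine arithmetic rather than a genuine difficulty.
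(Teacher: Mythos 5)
Your proposal is correct and follows essentially the same route as the paper's own proof: generalized H\"older's inequality \eqref{GH} with all exponents $p_i=k$, followed by bounding each factor $\sum_{{\bf m}}|\widehat{E_j}({\bf m})|^k$ by pulling out $k-2$ copies of the trivial bound $|\widehat{E_j}({\bf m})|\le |E_j|/q^d$ and applying Plancherel to the remaining square. The exponent bookkeeping, including the degenerate case $k=2$, checks out.
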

\begin{proof} Notice that if $E\subset \mathbb F_q^d$ and ${\bf m}\in \mathbb F_q^d,$ then $|\widehat{E}({\bf m})|\le |\widehat{E}(0,\ldots,0)|=\frac{|E|}{q^d} .$ Since $\sum\limits_{j=1}^k \frac{1}{k}=1,$ applying generalized H\"{o}lder's inequality yields the desirable result:
\begin{align*}
\sum_{{\bf m}\in \mathbb F_q^d} \left( \prod_{j=1}^k |\widehat{E_j}({\bf m})|\right)
&\le \prod_{j=1}^k \left(\sum_{{\bf m}\in \mathbb F_q^d} |\widehat{E_j}({\bf m})|^k \right)^{\frac{1}{k}}\\
&\le \prod_{j=1}^k \left(|\widehat{E_j}(0,\ldots,0)|^{\frac{k-2}{k}} \left(\sum_{{\bf m}\in \mathbb F_q^d} |\widehat{E_j}({\bf m})|^2\right)^{\frac{1}{k}}\right)\\
&=\prod_{j=1}^k \left( \left( \frac{|E_j|}{q^d}\right)^{\frac{k-2}{k}} \left(\frac{|E_j|}{q^d}\right)^{\frac{1}{k}}\right) =q^{-dk+d} \left( \prod_{j=1}^k |E_j|\right)^{\frac{k-1}{k}}
\end{align*}
\end{proof}

To estimate a lower bound of $|\Delta_k(E_1,\ldots, E_k)|$, we shall utilize the Fourier decay estimate on spheres. Recall that the sphere $S_t \subset \mathbb F_q^d$ for $t\in \mathbb F_q$ is defined by
\begin{equation} \label{defS_t} S_t=\{\textbf{x}\in \mathbb F_q^d: x_1^2+\cdots+x_d^2=t\}.\end{equation}
It is not hard to see that $|S_t|= q^{d-1}(1 + o(1))$ for $d\ge 3$ and $t\in \mathbb F_q$ (see Theorem 6.26 and Theorem 6.27 in \cite{LN97}).
It is well known that  the value of $\widehat{S_t}({\bf m})$ can be written in terms of the Gauss sum and the Kloosterman sum. In particular, when the dimension $d$ is even, the following result can be obtained from Lemma 4 in
\cite{IK10}.

\begin{lemma}\label{P2.1} Let $d\geq 2$ be even. If  $t\in \mathbb F_q$ and $\textbf{m}\in \mathbb F_q^d,$ then we have
$$  \widehat{S_t}(\textbf{m}) = q^{-1} \delta_0(\textbf{m}) + q^{-d-1} \,G^d \sum_{\ell \in {\mathbb F}_q^*}
\chi\Big(t\ell+ \frac{\|\textbf{m}\|}{4\ell}\Big),$$
where $\delta_0(\textbf{m})=1$ for $\textbf{m}=(0, \ldots, 0)$ and $\delta_0(\textbf{m})=0$ otherwise, and $G$ denotes the Gauss sum
$$
\displaystyle G =\sum_{s\in \mathbb F_q^*} \eta(s) \,\chi(s) ,
$$
where $\eta$ is the quadratic character of $\mathbb F_q,$ and $\mathbb F_q^* = \mathbb F_q \setminus \{0\}$.  In particular, we have
\begin{equation}\label{S_0}
\widehat{S_0}(\textbf{m})=q^{-1} \delta_0(\textbf{m}) + q^{-d-1}\, G^d \sum_{\ell \in {\mathbb F}_q^*} \chi( \|\textbf{m}\| \ell)\quad \mbox{for}~~\textbf{m}\in \mathbb F_q^d.\end{equation}
\end{lemma}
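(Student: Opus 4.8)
The plan is to evaluate $\widehat{S_t}$ directly from the definition of the normalized Fourier transform, expressing the indicator function of the sphere via the orthogonality relation of $\chi$ and then reducing everything to one-variable Gauss sums; alternatively one may simply quote Lemma~4 of \cite{IK10}, but the self-contained computation is short.

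First I would record the identity
$$S_t(\mathbf{x})=\frac{1}{q}\sum_{\ell\in\mathbb F_q}\chi\big(\ell(\|\mathbf{x}\|-t)\big)\qquad(\mathbf{x}\in\mathbb F_q^d),$$
which is immediate from the orthogonality relation of $\chi$ (the inner sum is $q$ when $\|\mathbf{x}\|=t$ and $0$ otherwise). Substituting this into $\widehat{S_t}(\mathbf{m})=q^{-d}\sum_{\mathbf{x}\in\mathbb F_q^d}S_t(\mathbf{x})\chi(-\mathbf{x}\cdot\mathbf{m})$ and interchanging the order of summation yields
$$\widehat{S_t}(\mathbf{m})=q^{-d-1}\sum_{\ell\in\mathbb F_q}\chi(-t\ell)\sum_{\mathbf{x}\in\mathbb F_q^d}\chi\big(\ell\|\mathbf{x}\|-\mathbf{x}\cdot\mathbf{m}\big).$$

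Next I would isolate the term $\ell=0$: there the inner sum equals $\sum_{\mathbf{x}}\chi(-\mathbf{x}\cdot\mathbf{m})=q^d\delta_0(\mathbf{m})$ by orthogonality, so it contributes exactly $q^{-1}\delta_0(\mathbf{m})$. For $\ell\neq0$ the inner sum factors coordinatewise into $\prod_{j=1}^d\sum_{x\in\mathbb F_q}\chi(\ell x^2-m_j x)$. Completing the square — which uses the standing hypothesis $\mathrm{char}(\mathbb F_q)>2$, so that $2$ and $4$ are invertible — together with the standard evaluation $\sum_{x\in\mathbb F_q}\chi(ax^2+bx)=\eta(a)\,G\,\chi(-b^2/(4a))$ for $a\in\mathbb F_q^*$, shows that each factor equals $\eta(\ell)\,G\,\chi(-m_j^2/(4\ell))$. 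Multiplying over $j=1,\dots,d$ gives $\eta(\ell)^d\,G^d\,\chi(-\|\mathbf{m}\|/(4\ell))$, and here is the one place the parity of $d$ is used: since $\eta(\ell)=\pm1$ and $d$ is even, $\eta(\ell)^d=1$. Hence the contribution of the terms $\ell\neq0$ is $q^{-d-1}G^d\sum_{\ell\in\mathbb F_q^*}\chi\big(-t\ell-\|\mathbf{m}\|/(4\ell)\big)$.

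Finally I would combine the two contributions and reindex $\ell\mapsto-\ell$, a bijection of $\mathbb F_q^*$ that turns $-t\ell-\|\mathbf{m}\|/(4\ell)$ into $t\ell+\|\mathbf{m}\|/(4\ell)$, which produces the asserted formula for $\widehat{S_t}(\mathbf{m})$; for the special case \eqref{S_0} I would set $t=0$ in this formula and reindex $\ell\mapsto4\ell$ to absorb the factor $1/4$. I do not expect a genuine obstacle: the argument is a routine Gauss-sum manipulation, and the only points demanding care are the correct bookkeeping of the factor $\eta(\ell)^d$ (which collapses to $1$ precisely because $d$ is even) and the use of $\mathrm{char}(\mathbb F_q)>2$ in completing the square.
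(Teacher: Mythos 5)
Your computation is correct, but note that the paper does not actually prove this lemma: it simply quotes Lemma 4 of \cite{IK10}, so your self-contained argument is precisely the standard computation behind that citation. Your route — writing $S_t(\mathbf{x})=q^{-1}\sum_{\ell\in\mathbb F_q}\chi\big(\ell(\|\mathbf{x}\|-t)\big)$, splitting off $\ell=0$ to produce $q^{-1}\delta_0(\mathbf{m})$, factoring the $\ell\neq0$ terms into $d$ one-variable quadratic Gauss sums, completing the square (legitimate since $\mathrm{char}(\mathbb F_q)>2$), and using $\eta(\ell)^d=1$ for even $d$ — is exactly how the general formula in \cite{IK10} is derived; in odd dimensions the factor $\eta^d(\ell)=\eta(\ell)$ survives, which is why the clean statement here requires $d$ even, and your write-up makes that visible, which the bare citation does not. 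One small correction: in deducing \eqref{S_0} the substitution you need is $\ell\mapsto(4\ell)^{-1}$, a bijection of $\mathbb F_q^*$ that converts $\chi\big(\|\mathbf{m}\|/(4\ell)\big)$ into $\chi(\|\mathbf{m}\|\ell)$; the map $\ell\mapsto4\ell$ alone only rescales and does not remove $\ell$ from the denominator. This is cosmetic and does not affect the validity of the argument. In short, your proof is a correct, more self-contained alternative to the paper's appeal to \cite{IK10}; the citation buys brevity, your computation buys transparency about where evenness of $d$ and the odd characteristic are used.
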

We shall invoke the following result which was given in Proposition 2.2 in \cite{KS13}.
\begin{lemma} \label{P2.2} If $\textbf{m}, \textbf{v} \in \mathbb F_q^d,$ then we have
$$  \sum_{t\in \mathbb F_q} \widehat{S_t}(\textbf{m})~ \overline{\widehat{S_t}}(\textbf{v}) =  q^{-1}\delta_0(\textbf{m})~\delta_0(\textbf{v}) + q^{-d-1} \sum_{s\in\mathbb F_q^*} \chi(s(\|\textbf{m}\|-\|\textbf{v}\|)).$$
\end{lemma}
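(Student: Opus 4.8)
The plan is to unfold both normalized Fourier transforms from the definition, interchange the sum over $t$ with the sum over pairs $(\mathbf{x},\mathbf{y})\in\mathbb{F}_q^d\times\mathbb{F}_q^d$, and then evaluate the resulting quadratic exponential (Gauss) sums explicitly. No deep input is needed beyond the orthogonality relation and the standard facts $\sum_{u\in\mathbb{F}_q}\chi(su^2)=\eta(s)G$ and $G\overline{G}=q$ (see \cite{LN97}).

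First I would write, for each fixed $t\in\mathbb{F}_q$, $\widehat{S_t}(\mathbf{m})\,\overline{\widehat{S_t}}(\mathbf{v})=q^{-2d}\sum_{\mathbf{x},\mathbf{y}\in S_t}\chi(-\mathbf{x}\cdot\mathbf{m}+\mathbf{y}\cdot\mathbf{v})$. Since the spheres $\{S_t\}_{t\in\mathbb{F}_q}$ partition $\mathbb{F}_q^d$ into the level sets of $\|\cdot\|$ (recall \eqref{defS_t}), summing over $t$ simply removes the membership constraint $\mathbf{x},\mathbf{y}\in S_t$ and replaces it by the single constraint $\|\mathbf{x}\|=\|\mathbf{y}\|$. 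I would then detect this last constraint by the orthogonality relation, writing the indicator of $\{\|\mathbf{x}\|=\|\mathbf{y}\|\}$ as $q^{-1}\sum_{s\in\mathbb{F}_q}\chi(s(\|\mathbf{x}\|-\|\mathbf{y}\|))$, and interchange the order of summation. This brings the left-hand side to the form
$$ q^{-2d-1}\sum_{s\in\mathbb{F}_q}\Bigl(\sum_{\mathbf{x}\in\mathbb{F}_q^d}\chi(s\|\mathbf{x}\|-\mathbf{x}\cdot\mathbf{m})\Bigr)\overline{\Bigl(\sum_{\mathbf{y}\in\mathbb{F}_q^d}\chi(s\|\mathbf{y}\|-\mathbf{y}\cdot\mathbf{v})\Bigr)}.$$

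The term $s=0$ is immediate: each inner sum is $q^d\delta_0(\cdot)$ by orthogonality, so the $s=0$ contribution equals $q^{-1}\delta_0(\mathbf{m})\delta_0(\mathbf{v})$. For $s\in\mathbb{F}_q^*$, since $\mathrm{char}(\mathbb{F}_q)>2$ I would complete the square coordinatewise, $s\|\mathbf{x}\|-\mathbf{x}\cdot\mathbf{m}=s\|\mathbf{x}-(2s)^{-1}\mathbf{m}\|-(4s)^{-1}\|\mathbf{m}\|$, and translate $\mathbf{x}\mapsto\mathbf{x}+(2s)^{-1}\mathbf{m}$ to obtain $\sum_{\mathbf{x}}\chi(s\|\mathbf{x}\|-\mathbf{x}\cdot\mathbf{m})=\chi\bigl(-(4s)^{-1}\|\mathbf{m}\|\bigr)\,(\eta(s)G)^d$, and likewise for the $\mathbf{y}$-sum. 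Taking the conjugate and using $\eta(s)^{2d}=1$ together with $G\overline{G}=q$, the $s\neq0$ part collapses to $q^{-d-1}\sum_{s\in\mathbb{F}_q^*}\chi\bigl((4s)^{-1}(\|\mathbf{v}\|-\|\mathbf{m}\|)\bigr)$; the substitution $s\mapsto(4s)^{-1}$ followed by $s\mapsto -s$ on $\mathbb{F}_q^*$ rewrites this as $q^{-d-1}\sum_{s\in\mathbb{F}_q^*}\chi(s(\|\mathbf{m}\|-\|\mathbf{v}\|))$. Adding the two contributions yields the claimed identity.

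I do not expect a genuine obstacle: the proof is a self-contained computation. The points that require care are (i) the bookkeeping when pulling the sum over $t$ inside, which relies only on the fact that $\|\cdot\|$ partitions $\mathbb{F}_q^d$ into the level sets $S_t$; (ii) the two changes of variables $\mathbf{x}\mapsto\mathbf{x}+(2s)^{-1}\mathbf{m}$ and $s\mapsto(4s)^{-1}$, which is exactly where the hypothesis that the characteristic exceeds two enters; and (iii) correctly combining the Gauss-sum factors through $G\overline G=q$ so that only a clean power of $q$ survives. Note that the argument, and hence the identity, holds in every dimension $d$, with no parity restriction.
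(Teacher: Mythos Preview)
Your proof is correct and self-contained. The paper itself does not give a proof of this lemma; it simply cites Proposition~2.2 of \cite{KS13}. Your argument---expanding the normalized transforms, using that $\{S_t\}_{t\in\mathbb F_q}$ partitions $\mathbb F_q^d$ to reduce to the constraint $\|\mathbf x\|=\|\mathbf y\|$, detecting that constraint by orthogonality, and then completing the square to evaluate the resulting Gauss sums via $G\overline G=q$---is exactly the standard direct computation and reproduces the identity in all dimensions with no parity assumption, as you note. There is nothing to correct.
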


\section{Formula for a lower bound of $|\Delta_k(E_1,\ldots,E_k)|$}
This section devotes to proving the following result which is useful to deduce a lower bound of $|\Delta_k(E_1,\ldots,E_k)|.$
\begin{theorem} \label{FormulaD} Let $d\ge 2$ be even and $k\ge 2$ be an integer.
If $ E_j\subset \mathbb F_q^d$ for $j=1,2,\ldots,k$ and $\prod\limits_{j=1}^k |E_j| \ge 3^k q^{\frac{dk}{2}}$, then we have
$$ |\Delta_{k}(E_1, \ldots, E_k)| \gtrsim
\min\left\{q, ~ \frac{\left( \prod\limits_{j=1}^k |E_j| \right)^{\frac{k+1}{k}}}
{ q^{dk}\, \left(\max\limits_{r\in \mathbb F_q^*} \prod\limits_{j=1}^k\left(\sum\limits_{{\bf v}\in S_r}  |\widehat{E_j}({\bf v})|^k \right)^{\frac{1}{k}}\right)}\right\}.$$
\end{theorem}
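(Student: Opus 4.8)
The plan is to run a Cauchy--Schwarz counting argument, but carried out over the nonzero moduli only. Set $N:=\prod_{j=1}^k|E_j|$, write $\Delta:=\Delta_k(E_1,\dots,E_k)$, and for $t\in\mathbb F_q$ let
\[
\nu(t):=\#\big\{({\bf x}^1,\dots,{\bf x}^k)\in E_1\times\cdots\times E_k:\ \|{\bf x}^1+\cdots+{\bf x}^k\|=t\big\},
\]
so that $\Delta=\{t:\nu(t)>0\}$ and $\sum_{t\in\mathbb F_q}\nu(t)=N$. A standard Fourier expansion (the convolution identity together with Plancherel) gives $\nu(t)=q^{dk}\sum_{{\bf m}\in\mathbb F_q^d}\big(\prod_{j=1}^k\widehat{E_j}({\bf m})\big)\overline{\widehat{S_t}({\bf m})}$. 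I will insert the formula of Lemma~\ref{P2.1} and split the ${\bf m}$-sum into the three ranges $\{{\bf 0}\}$, the punctured null cone $\{{\bf m}\neq{\bf 0},\ \|{\bf m}\|=0\}$, and $\{\|{\bf m}\|\neq0\}$; since $\sum_{\ell\in\mathbb F_q^*}\chi(t\ell)=-1$ for $t\neq0$, this yields for every $t\in\mathbb F_q^*$ the decomposition
\[
\nu(t)=\frac{|S_t|\,N}{q^d}+c_*+\mathcal R(t),\qquad c_*:=-\,q^{dk-d-1}G^d\!\!\sum_{{\bf m}\neq{\bf 0},\,\|{\bf m}\|=0}\prod_{j=1}^k\widehat{E_j}({\bf m}),
\]
in which $c_*$ is independent of $t$, and $\mathcal R(t):=q^{dk}\sum_{\|{\bf m}\|\neq0}\big(\prod_{j=1}^k\widehat{E_j}({\bf m})\big)\overline{\widehat{S_t}({\bf m})}$.

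Next come the two estimates that consume the hypothesis $N\ge 3^kq^{dk/2}$. First, from $|\widehat{E_j}({\bf m})|\le|E_j|/q^d$, the Plancherel theorem, and the generalized H\"{o}lder inequality \eqref{GH}, one gets $\big|\sum_{{\bf m}\neq{\bf 0},\,\|{\bf m}\|=0}\prod_j\widehat{E_j}({\bf m})\big|\le\prod_j\big(\sum_{{\bf m}\in S_0}|\widehat{E_j}({\bf m})|^k\big)^{1/k}\le N^{(k-1)/k}q^{-d(k-1)}$; using $|G|^2=q$ this gives $|c_*|\le q^{d/2}N^{-1/k}\cdot\tfrac{N}{q}\le\tfrac13\cdot\tfrac{N}{q}$, where the last step is precisely $N^{1/k}\ge 3q^{d/2}$. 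Second, carrying out the same decomposition at $t=0$ --- where now $\sum_{\ell\in\mathbb F_q^*}\chi(0)=q-1$ on the null cone and $\sum_{\ell\in\mathbb F_q^*}\chi(\|{\bf m}\|\ell)=-1$ off it --- and bounding the three pieces by the same H\"{o}lder estimates together with $|S_0|\le 2q^{d-1}$, one obtains $\nu(0)\le\tfrac13 N+O(N/q)$, hence $\nu(0)\le\tfrac12 N$ once $q$ exceeds an absolute constant; therefore $\sum_{t\in\mathbb F_q^*}\nu(t)=N-\nu(0)\ge\tfrac12 N$. (For $q$ below that constant the asserted inequality is trivial, as $|\Delta|\ge1$.)

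To finish, Cauchy--Schwarz over the nonzero moduli on which $\nu$ is supported gives
\[
|\Delta|\ \ge\ |\Delta\setminus\{0\}|\ \ge\ \frac{\big(\sum_{t\in\mathbb F_q^*}\nu(t)\big)^2}{\sum_{t\in\mathbb F_q^*}\nu(t)^2}\ \ge\ \frac{N^2/4}{\sum_{t\in\mathbb F_q^*}\nu(t)^2}.
\]
By $(a+b+c)^2\le 3(a^2+b^2+c^2)$ and the decomposition, $\sum_{t\in\mathbb F_q^*}\nu(t)^2\le 3q^{-2d}N^2\sum_t|S_t|^2+3q\,|c_*|^2+3\sum_t|\mathcal R(t)|^2$: the first term is $\lesssim N^2/q$ since $\sum_t|S_t|^2=\#\{({\bf x},{\bf y}):\|{\bf x}\|=\|{\bf y}\|\}\lesssim q^{2d-1}$, and the second is $\le N^2/(3q)$ by the bound on $c_*$. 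For the third, Lemma~\ref{P2.2} (whose $\delta_0$-term vanishes because $\|{\bf m}\|,\|{\bf v}\|\neq0$) and orthogonality give $\sum_t|\mathcal R(t)|^2=q^{2dk-d-1}\sum_{s\in\mathbb F_q^*}\big|\sum_{\|{\bf m}\|\neq0}\big(\prod_j\widehat{E_j}({\bf m})\big)\chi(-s\|{\bf m}\|)\big|^2\le q^{2dk-d}\sum_{r\in\mathbb F_q^*}|A(r)|^2$ with $A(r):=\sum_{{\bf v}\in S_r}\prod_j\widehat{E_j}({\bf v})$; two more applications of \eqref{GH}, over $S_r$ and then over $r$, together with Plancherel give $\sum_{r\in\mathbb F_q^*}|A(r)|^2\le\big(\max_{r\in\mathbb F_q^*}\prod_j\big(\sum_{{\bf v}\in S_r}|\widehat{E_j}({\bf v})|^k\big)^{1/k}\big)\,N^{(k-1)/k}q^{-d(k-1)}$, whence $\sum_t|\mathcal R(t)|^2\le q^{dk}\,N^{(k-1)/k}\max_{r\in\mathbb F_q^*}\prod_j\big(\sum_{{\bf v}\in S_r}|\widehat{E_j}({\bf v})|^k\big)^{1/k}$. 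Substituting and invoking $\tfrac{X}{Y+Z}\ge\tfrac12\min\{X/Y,\,X/Z\}$ gives exactly the claimed bound.

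The hard part, and the reason $t=0$ must be excluded, is the null-cone contribution to $\nu(0)$: for the conjecturally extremal configurations --- each $E_j$ a union of a few cosets of a maximal totally isotropic subspace --- $\nu(0)$ is of order $3^{-k}N$, so that $\nu(0)^2$ alone already dominates $N^2/q$ as soon as $q>9^k$, even though $\mathcal R\equiv0$ there; an estimate for $\sum_{t\in\mathbb F_q}\nu(t)^2$ would then only yield $|\Delta|\gtrsim1$. Deleting the single modulus $t=0$ removes this spike, and it is exactly the smallness of the genuine constant $c_*$ on $\mathbb F_q^*$ and the bound $\nu(0)\le N/2$ --- both being where the quantitative hypothesis $N\ge 3^kq^{dk/2}$ is spent --- that make the nonzero-modulus count succeed; the remaining steps are routine Fourier analysis with Lemmas~\ref{P2.1}--\ref{P2.2} and \eqref{GH}.
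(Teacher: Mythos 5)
Your proof is correct, and it reaches exactly the stated bound, but it organizes the argument around a different decomposition than the paper's, so the comparison is worth recording. The paper computes the full second moment $\sum_{t\in\mathbb F_q}\nu_k^2(t)$ via Lemma \ref{P2.2} (Claim \ref{C2}), which unavoidably produces the zero-radius term $q^{2dk-d}\bigl|\sum_{{\bf m}\in S_0}\prod_j\widehat{E_j}({\bf m})\bigr|^2$, and then neutralizes that term through the cancellation of Claim \ref{C3}: writing $\nu_k(0)=\mbox{A}+\mbox{B}$ with $\mbox{B}=q^{dk-d}G^d\sum_{{\bf m}\in S_0}\prod_j\overline{\widehat{E_j}}({\bf m})$, so that subtracting $\nu_k^2(0)$ removes $|\mbox{B}|^2$ exactly and leaves only cross terms of size $O\bigl(N^2/q\bigr)$, where $N=\prod_j|E_j|$. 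You instead split $\nu(t)$ pointwise for $t\ne 0$ into the main term $N|S_t|/q^d$, the $t$-independent null-cone constant $c_*$, and the off-cone remainder $\mathcal R(t)$, so that after squaring and applying Lemma \ref{P2.2} (equivalently, orthogonality in $s$) only the spheres $S_r$ with $r\ne 0$ ever appear; the hypothesis $\prod_j|E_j|\ge 3^kq^{dk/2}$ is spent on $|c_*|\le N/(3q)$ and on $\nu(0)\le N/2$, which is precisely where the paper spends it in Claims \ref{C1} and \ref{C3}. What your route buys is a more transparent treatment: there is no $\mbox{A}\overline{\mbox{B}}$ cross-term bookkeeping, the role of the excluded modulus $t=0$ is explained (your closing remark about unions of cosets of a maximal isotropic subspace is a correct justification that the exclusion is necessary), and the error analysis reduces to the same H\"older--Plancherel bound as Lemma \ref{lem2.1}, applied once over $S_0$ and once over $S_r$; the small price is the extra elementary inputs $|S_0|\le 2q^{d-1}$ and $\sum_t|S_t|^2\lesssim q^{2d-1}$ and a large-$q$ reduction (harmless, since the claim is trivial for bounded $q$). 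Two cosmetic points only: in $c_*$ the Gauss-sum factor should be $\overline{G^d}$ with your choice of where to place conjugates (immaterial, since only $|c_*|$ enters), and your ``second application of \eqref{GH} over $r$'' is really the step $\sum_{r\ne 0}|A(r)|\le\sum_{{\bf v}\in\mathbb F_q^d}\prod_j|\widehat{E_j}({\bf v})|$ followed by Lemma \ref{lem2.1}, which does give the inequality you state.
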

\begin{proof} For each $t\in \mathbb F_q,$ we define a counting function $\nu_k(t)$ by
$$ \nu_k(t)=\left|\{({\bf x}^1, {\bf x}^2,\ldots, {\bf x}^k)\in E_1\times \ldots \times E_k: \|{\bf x}^1+\cdots + {\bf x}^k\|=t\}\right|.$$
Since $\prod\limits_{j=1}^k |E_j| = \sum\limits_{t\in \mathbb F_q} \nu_k(t)$ and $\nu_k(t)=0$ for $t\notin \Delta_k(E_1,\ldots, E_k)$, we see that
$$ \prod_{j=1}^k |E_j| -\nu_k(0) =  \sum_{0\ne t\in \Delta_k(E_1,\ldots,E_k)} \nu_k(t).$$
 Square both sides of this equation and use the Cauchy-Schwarz inequality. It follows 
 $$ \left(\prod_{j=1}^k |E_j| -\nu_k(0)\right)^2=\left(\sum_{0\ne t\in \Delta_k(E_1,\ldots,E_k)}  \nu_k(t)\right)^2 \le |\Delta_k(E_1,\ldots,E_k)| \left(\sum_{t\in \mathbb F_q^*} \nu_k^2(t)\right).$$
 Thus, we obtain
 \begin{equation}\label{eq3.1} |\Delta_k(E_1,\ldots,E_k)|\ge \frac{\left(\prod\limits_{j=1}^k |E_j| -\nu_k(0)\right)^2}{\sum\limits_{t\in \mathbb F_q^*} \nu_k^2(t)}.\end{equation}
Now, we claim three facts below.
\begin{claim}\label{C1} Suppose that $d\ge 2$ is even and $k\ge 2$ is an integer.
If $E_j\subset \mathbb F_q^d$ for $j=1,2,\ldots,k$ with $\prod\limits_{j=1}^k |E_j| \ge 3^k q^{\frac{dk}{2}}$, then we have
$$ \left(\prod\limits_{j=1}^k |E_j| -\nu_k(0)\right)^2\ge \frac{1}{9} \left(\prod\limits_{j=1}^k |E_j|   \right)^2. $$
\end{claim}
\begin{claim} \label{C2} Let $d\ge 2$ and $k\ge 2$ be integers. If $E_j\subset \mathbb F_q^d$ for $j=1,2,\ldots, k,$ then we have
$$ \sum_{t\in \mathbb F_q} \nu_k^2(t) \le \frac{1}{q} \left( \prod_{j=1}^k |E_j|\right)^2 + q^{2dk-d} \sum_{r\in \mathbb F_q} \left| \sum_{{\bf v}\in S_r}\left(\prod_{j=1}^k \widehat{E_j}({\bf v})\right)\right|^2.$$
\end{claim}

\begin{claim}\label{C3} Assume that $d\ge 2$ is even and $k\ge 2$ is an integer.
If $E_j\subset \mathbb F_q^d$ for $j=1,2,\ldots,k$ with $\prod\limits_{j=1}^k |E_j| \ge q^{\frac{dk}{2}}$, then we have
$$ q^{2dk-d} \left| \sum_{{\bf m}\in S_0}\left(\prod_{j=1}^k \widehat{E_j}({\bf m})\right)\right|^2 -\nu^2_k(0) \le \frac{4}{q} \left( \prod_{j=1}^k |E_j|\right)^2.$$
\end{claim}
For a moment, let us accept Claims \ref{C1}, \ref{C2}, and \ref{C3} which shall be proved in the following subsections (see Subsections \ref{sub3.1}, \ref{sub3.2}, and \ref{sub3.3}). From \eqref{eq3.1} and Claim \ref{C1}, we see that if $\prod\limits_{j=1}^k|E_j| \ge 3^k q^{\frac{dk}{2}},$ then
 \begin{equation} \label{eq3.2}|\Delta_k(E_1,\ldots,E_k)|\gtrsim \frac{\left(\prod\limits_{j=1}^k |E_j|\right)^2}{\sum\limits_{t\in \mathbb F_q^*} \nu_k^2(t)}.\end{equation}
Observe from Claims \ref{C2} and \ref{C3} that if $\prod\limits_{j=1}^k|E_j| \ge 3^k q^{\frac{dk}{2}},$ then
\begin{align*} \sum_{t\in \mathbb F_q^*} \nu_k^2(t) &= \sum_{t\in \mathbb F_q} \nu_k^2(t)
-\nu_k^2(0) \le \frac{5}{q} \left( \prod_{j=1}^k |E_j|\right)^2  +q^{2dk-d} \sum_{r\ne 0} \left| \sum_{{\bf v}\in S_r}\left(\prod_{j=1}^k \widehat{E_j}({\bf v})\right)\right|^2\\
&\lesssim \frac{1}{q} \left( \prod_{j=1}^k |E_j|\right)^2
+q^{2dk-d} \left(\max_{r\ne 0} \sum_{{\bf v}\in S_r}\left(\prod_{j=1}^k |\widehat{E_j}({\bf v})|\right)\right) \left(\sum_{{\bf v}\in \mathbb F_q^d}    \left(\prod_{j=1}^k |\widehat{E_j}({\bf v})|\right) \right)\\
&\le \frac{1}{q} \left( \prod_{j=1}^k |E_j|\right)^2 +q^{dk} \left( \prod_{j=1}^k |E_j|\right)^{\frac{k-1}{k}}\left(\max_{r\ne 0} \sum_{{\bf v}\in S_r}\left(\prod_{j=1}^k |\widehat{E_j}({\bf v})|\right)\right), \end{align*}
where Lemma \ref{lem2.1} was used to obtain the last inequality.
From this estimate and \eqref{eq3.2}, it follows that
\begin{align*} |\Delta_k(E_1,\ldots,E_k)|&\gtrsim \frac{\left(\prod\limits_{j=1}^k |E_j|\right)^2}{\frac{1}{q} \left( \prod\limits_{j=1}^k |E_j|\right)^2 +q^{dk} \left( \prod\limits_{j=1}^k |E_j|\right)^{\frac{k-1}{k}}\left(\max\limits_{r\in \mathbb F_q^*} \sum\limits_{{\bf v}\in S_r}\left(\prod\limits_{j=1}^k |\widehat{E_j}({\bf v})|\right)\right)}\\
&\gtrsim  \min\left\{q, ~ \frac{\left( \prod\limits_{j=1}^k |E_j| \right)^{\frac{k+1}{k}}}
{ q^{dk}\, \left(\max\limits_{r\in \mathbb F_q^*} \sum\limits_{{\bf v}\in S_r} \left(\prod\limits_{j=1}^k |\widehat{E_j}({\bf v})|\right) \right)}\right\}.  \end{align*}
Then the statement of Theorem 3.1 follows by applying  generalized H\"{o}lder's inequality \eqref{GH}:
$$\max\limits_{r\in \mathbb F_q^*} \sum\limits_{{\bf v}\in S_r} \left(\prod\limits_{j=1}^k |\widehat{E_j}({\bf v})|\right)
\le\,  \max\limits_{r\in \mathbb F_q^*} \,\prod\limits_{j=1}^k\left(\sum\limits_{{\bf v}\in S_r}  |\widehat{E_j}({\bf v})|^k \right)^{\frac{1}{k}}.$$

\end{proof}

\subsection{Proof of Claim \ref{C1}} \label{sub3.1}
Suppose that $d\ge 2$ is even and $k\ge 2$ is an integer.
Let $E_j\subset \mathbb F_q^d$ for $j=1,2,\ldots,k$ with $\prod\limits_{j=1}^k |E_j| \ge 3^k q^{\frac{dk}{2}}.$ We aim to show that
\begin{equation} \label{eq3.3}\left(\prod\limits_{j=1}^k |E_j| -\nu_k(0)\right)^2\ge \frac{1}{9} \left(\prod\limits_{j=1}^k |E_j|   \right)^2.\end{equation}
To prove this, we begin by estimating the counting function $\nu_k(t)$ for $t\in \mathbb F_q.$ For each $t\in \mathbb F_q,$ it follows that
\begin{align*}
\nu_k(t)&=\left|\{({\bf x}^1, {\bf x}^2,\ldots, {\bf x}^k)\in E_1\times \ldots \times E_k: \|{\bf x}^1+\cdots + {\bf x}^k\|=t\}\right|\\
&=\sum_{({\bf x}^1, {\bf x}^2,\ldots,{\bf x}^k)\in E_1\times E_2 \times \cdots \times E_k} S_t({\bf x}^1+{\bf x}^2+ \cdots+ {\bf x}^k).
\end{align*}
Applying the Fourier inversion theorem to the indicate function $S_t({\bf x}^1+{\bf x}^2+ \cdots+ {\bf x}^k)$, it follows that
$$ \nu_k(t) = \sum_{({\bf x}^1, {\bf x}^2,\ldots,{\bf x}^k)\in \mathbb F_q^d\times \cdots \times \mathbb F_q^d} E_1({\bf x}^1) \cdots E_k({\bf x}^k) \sum_{{\bf m}\in \mathbb F_q^d} \widehat{S_t}({\bf m})\, \chi({\bf m}\cdot ({\bf x}^1+\cdots+{\bf x}^k)).$$
By the definition of the normalized Fourier transform, we can write
\begin{equation}\label{eq3.4}\nu_k(t)= q^{dk} \sum_{{\bf m}\in \mathbb F_q^d} \widehat{S_t}({\bf m}) \left( \prod_{j=1}^k \overline{\widehat{E_j}}({\bf m})\right). \end{equation}
To prove \eqref{eq3.3}, we first find an upper bound of $\nu_k(0)= q^{dk} \sum\limits_{{\bf m}\in \mathbb F_q^d} \widehat{S_0}({\bf m}) \left( \prod\limits_{j=1}^k \overline{\widehat{E_j}}({\bf m})\right).$ By \eqref{S_0} of Lemma \ref{P2.1}, we can write
\begin{align}\label{nonumber3.5} \nonumber \nu_k(0)&= q^{dk} \sum\limits_{{\bf m}\in \mathbb F_q^d} \left( \prod\limits_{j=1}^k \overline{\widehat{E_j}}({\bf m})\right)
\left(q^{-1} \delta_0(\textbf{m}) + q^{-d-1}\, G^d \sum_{\ell \in {\mathbb F}_q^*} \chi( \|\textbf{m}\| \ell)\right)\\
 & =q^{dk-1} \left(\prod_{j=1}^k \overline{\widehat{E_j}}(0,\ldots,0) \right)
+ q^{dk-d-1} \,G^d \sum_{{\bf m}\in \mathbb F_q^d}\left( \prod\limits_{j=1}^k \overline{\widehat{E_j}}({\bf m})\right)\sum_{\ell \in {\mathbb F}_q^*} \chi( \|\textbf{m}\| \ell)   \end{align}
Since $ \overline{\widehat{E_j}}(0,\ldots,0)=\frac{|E_j|}{q^d},\,
|G|=q^{1/2},$ and $|\sum_{\ell \in {\mathbb F}_q^*} \chi( \|\textbf{m}\| \ell)|\le q,$ it follows that
$$\nu_k(0)\le q^{-1} \prod_{j=1}^k |E_j| + q^{dk-d/2} \sum_{{\bf m}\in \mathbb F_q^d} \prod_{j=1}^k |\widehat{E_j}({\bf m})|$$

Applying  Lemma \ref{lem2.1},
$$\nu_k(0)\le q^{-1} \prod_{j=1}^k |E_j|+ q^{\frac{d}{2}} \left(\prod_{j=1}^k |E_j|   \right)^{\frac{k-1}{k}}.$$
Since $q\ge 3,$ it follows that
\begin{align*} \prod\limits_{j=1}^k |E_j| -\nu_k(0) &\ge \prod\limits_{j=1}^k |E_j| -q^{-1} \prod_{j=1}^k |E_j|- q^{\frac{d}{2}} \left(\prod_{j=1}^k |E_j|   \right)^{\frac{k-1}{k}}\\
&\ge \frac{1}{3}\prod\limits_{j=1}^k |E_j| + \left( \frac{1}{3} \prod\limits_{j=1}^k |E_j|- q^{\frac{d}{2}} \left(\prod_{j=1}^k |E_j|   \right)^{\frac{k-1}{k}}\right).\end{align*}
Note that if $\prod\limits_{j=1}^k |E_j|\ge 3^k q^{\frac{dk}{2}},$ then the second term above is nonnegative. Thus we obtain that
$$\prod\limits_{j=1}^k |E_j| -\nu_k(0) \ge  \frac{1}{3}\prod\limits_{j=1}^k |E_j|.$$
Squaring the both sizes, we complete the proof of Claim \ref{C1}.
\subsection{Proof of Claim \ref{C2}}\label{sub3.2}
We want to prove the following $L^2$ estimate of the counting function $\nu(t):$
\begin{equation}
 \sum_{t\in \mathbb F_q} \nu_k^2(t) \le \frac{1}{q} \left( \prod_{j=1}^k |E_j|\right)^2 + q^{2dk-d} \sum_{r\in \mathbb F_q} \left| \sum_{{\bf v}\in S_r}\left(\prod_{j=1}^k \widehat{E_j}({\bf v})\right)\right|^2.
\end{equation}
By \eqref{eq3.4}, we see that
\begin{align*}
\sum_{t\in \mathbb F_q} \nu_k^2(t) &= \sum_{t\in \mathbb F_q} \nu_k(t) \,\overline{\nu_k}(t)\\
&= q^{2dk} \sum_{{\bf m}, {\bf v}\in \mathbb F_q^d} \left( \prod_{j=1}^k \overline{\widehat{E_j}}({\bf m})\right) \left( \prod_{j=1}^k \widehat{E_j}{({\bf v})}\right) \sum_{t\in \mathbb F_q} \widehat{S_t}({\bf m}) \,\overline{\widehat{S_t}}({\bf v}).
\end{align*}

Using Lemma \ref{P2.2}, we see that
\begin{align*}
\sum_{t\in \mathbb F_q} \nu_k^2(t) =& \,q^{2dk-1} \sum_{{\bf m},{\bf v}\in \mathbb F_q^d} \left( \prod_{j=1}^k \overline{\widehat{E_j}}({\bf m})\right)\left( \prod_{j=1}^k \widehat{E_j}{({\bf v})}\right) \delta_0({\bf m})\,\delta_{0}({\bf v}) \\
&+ q^{2dk-d-1} \sum_{{\bf m}, {\bf v}\in \mathbb F_q^d} \left( \prod_{j=1}^k \overline{\widehat{E_j}}({\bf m})\right)\left( \prod_{j=1}^k \widehat{E_j}{({\bf v})}\right) \left( \sum_{s\in \mathbb F_q} \chi\left(s(\|{\bf m}\| - \|{\bf v}\|)\right) -1\right)\\
=& \,q^{2dk-1} \left(  \prod_{j=1}^k \overline{\widehat{E_j}}(0,\ldots,0)\right)\left( \prod_{j=1}^k \widehat{E_j}{(0, \ldots, 0)}\right) \\
&+ q^{2dk-d-1} \sum_{{\bf m}, {\bf v}\in \mathbb F_q^d} \left( \prod_{j=1}^k \overline{\widehat{E_j}}({\bf m})\right)\left( \prod_{j=1}^k \widehat{E_j}{({\bf v})}\right) \sum_{s\in \mathbb F_q} \chi\left(s(\|{\bf m}\| - \|{\bf v}\|)\right)\\
&- q^{2dk-d-1} \sum_{{\bf m}, {\bf v}\in \mathbb F_q^d} \left( \prod_{j=1}^k \overline{\widehat{E_j}}({\bf m})\right)\left( \prod_{j=1}^k \widehat{E_j}{({\bf v})}\right).
\end{align*}
By the definition of the normalized Fourier transform,  the orthogonality relation of $\chi$, and basic property of summation, it follows that
\begin{align*}
\sum_{t\in \mathbb F_q} \nu_k^2(t) =& \,q^{2dk-1} \left(\prod_{j=1}^k \frac{|E_j|}{q^d}\right)^2 \\
&+ q^{2dk-d} \sum_{{\bf m}, {\bf v}\in \mathbb F_q^d: \|{\bf m}\|=\|{\bf v}\|} \left( \prod_{j=1}^k \overline{\widehat{E_j}}({\bf m})\right)\left( \prod_{j=1}^k \widehat{E_j}{({\bf v})}\right)\\
&- q^{2dk-d-1} \left| \sum_{{\bf v}\in \mathbb F_q^d} \left(\prod_{j=1}^k \widehat{E_j}({\bf v})\right) \right|^2
\end{align*}
Since the third term above is not positive, we obtain that
\begin{align*}
\sum_{t\in \mathbb F_q} \nu_k^2(t) \le&\, q^{2dk-1} \left(\prod_{j=1}^k \frac{|E_j|}{q^d}\right)^2 + q^{2dk-d} \sum_{{\bf m}, {\bf v}\in \mathbb F_q^d: \|{\bf m}\|=\|{\bf v}\|} \left( \prod_{j=1}^k \overline{\widehat{E_j}}({\bf m})\right)\left( \prod_{j=1}^k \widehat{E_j}{({\bf v})}\right)\\
=& \,q^{-1} \left(\prod_{j=1}^k |E_j|\right)^2 + q^{2dk-d} \sum_{r\in \mathbb F_q} \left| \sum_{{\bf v}\in \mathbb F_q^d: \|{\bf v}\|=r} \left( \prod_{j=1}^k \widehat{E_j}({\bf v})\right)\right|^2,
\end{align*}
which completes the proof of Claim \ref{C2}.

\subsection{Proof of Claim \ref{C3}} \label{sub3.3}
For even $d\ge 2$ and an integer $k\ge 2$, let
 $E_j\subset \mathbb F_q^d$ for $j=1,2,\ldots,k$ with $\prod\limits_{j=1}^k |E_j| \ge q^{\frac{dk}{2}}.$ We must show that
\begin{equation}\label{eq3.7} q^{2dk-d} \left| \sum_{{\bf m}\in S_0}\left(\prod_{j=1}^k \widehat{E_j}({\bf m})\right)\right|^2 -\nu^2_k(0) \le \frac{4}{q} \left( \prod_{j=1}^k |E_j|\right)^2.\end{equation}
We begin by recalling from \eqref{nonumber3.5} that if $d\ge 2$ is even, then
$$ \nu_k(0)=q^{dk-1} \left(\prod_{j=1}^k \overline{\widehat{E_j}}(0,\ldots,0) \right)
+ q^{dk-d-1} \,G^d \sum_{{\bf m}\in \mathbb F_q^d}\left( \prod\limits_{j=1}^k \overline{\widehat{E_j}}({\bf m})\right)\sum_{\ell \in {\mathbb F}_q^*} \chi( \|\textbf{m}\| \ell).$$
It follows that
\begin{align*}
\nu_k(0) =& \,q^{-1} \left(\prod_{j=1}^k |E_j|\right) + q^{dk-d-1} \,G^d \sum_{{\bf m}\in \mathbb F_q^d}\left( \prod\limits_{j=1}^k \overline{\widehat{E_j}}({\bf m})\right)
\left(-1+\sum_{\ell \in {\mathbb F}_q} \chi( \|\textbf{m}\| \ell) \right)\\
=&\, \left[ q^{-1} \left(\prod_{j=1}^k |E_j|\right) - q^{dk-d-1} \,G^d \sum_{{\bf m}\in \mathbb F_q^d}\left( \prod\limits_{j=1}^k \overline{\widehat{E_j}}({\bf m})\right)\right]\\
 &+ q^{dk-d}\, G^d \sum_{{\bf m}\in S_0} \left( \prod_{j=1}^k \overline{\widehat{E_j}}({\bf m}) \right) := \mbox{A} + \mbox{B}.
\end{align*}
Thus we can write
\begin{align*}
\nu^2_k(0) &= \nu_k(0)\, \overline{\nu_k(0)} =(\mbox{A} + \mbox{B}) (\overline{\mbox{A}} + \overline{\mbox{B}})
= |\mbox{A}|^2 + |\mbox{B}|^2 + \mbox{A} \overline{\mbox{B}} + \overline{\mbox{A}} \mbox{B}
\end{align*}
Since the absolute value of the Gauss sum $G$ is $\sqrt{q}$, we have
$$\nu^2_k(0) = q^{2dk-d} \left|\sum_{{\bf m}\in S_0} \left(\prod_{j=1}^k \widehat{E_j}({\bf m})\right)\right|^2 + |\mbox{A}|^2+ \mbox{A} \overline{\mbox{B}} + \overline{\mbox{A}} \mbox{B}.$$
It follows that
\begin{equation}\label{eq3.8}
q^{2dk-d} \left| \sum_{{\bf m}\in S_0}\left(\prod_{j=1}^k \widehat{E_j}({\bf m})\right)\right|^2 -\nu^2_k(0) \le -\mbox{A} \overline{\mbox{B}} - \overline{\mbox{A}} \mbox{B} \le 2 |\mbox{A}| |\mbox{B}|.
\end{equation}
Now, notice that
$$ |A|\le q^{-1} \left(\prod_{j=1}^k |E_j|\right) + q^{dk-d-1} \,|G|^d \sum_{{\bf m}\in \mathbb F_q^d}\left( \prod\limits_{j=1}^k
|\widehat{E_j}({\bf m})|\right)$$
and
$$ |B|\le q^{dk-d}\,|G|^d \sum_{{\bf m}\in \mathbb F_q^d}\left(\prod_{j=1}^k |\widehat{E_j}({\bf m})|\right).$$
Since $|G|=\sqrt{q},$  using Lemma \ref{lem2.1} yields the following two estimates:
$$|A|\le q^{-1} \left(\prod_{j=1}^k |E_j|\right) +q^{\frac{d}{2}-1} \left(\prod_{j=1}^k |E_j|\right)^{\frac{k-1}{k}}$$
and
$$ |B| \le q^{\frac{d}{2}} \left(\prod_{j=1}^k |E_j|\right)^{\frac{k-1}{k}}.
$$
From these estimates and \eqref{eq3.8}, we have
\begin{align*} &q^{2dk-d} \left| \sum_{{\bf m}\in S_0}\left(\prod_{j=1}^k \widehat{E_j}({\bf m})\right)\right|^2 -\nu^2_k(0)\\
&\le 2 \left(q^{\frac{d}{2}-1}\left(\prod_{j=1}^k |E_j|\right)^{\frac{2k-1}{k}} +q^{d-1} \left(\prod_{j=1}^k |E_j|\right)^{\frac{2k-2}{k}}\right).\end{align*}
Finally, we obtain the estimate \eqref{eq3.7} by
observing that if $\prod\limits_{j=1}^k |E_j|\ge q^{\frac{dk}{2}}$, then
$$\max\left\{ q^{\frac{d}{2}-1}\left(\prod_{j=1}^k |E_j|\right)^{\frac{2k-1}{k}}~,~ q^{d-1} \left(\prod_{j=1}^k |E_j|\right)^{\frac{2k-2}{k}}\right\} \le q^{-1} \left(\prod_{j=1}^k |E_j|\right)^2.$$
Thus the proof of Claim \ref{C3} is complete.

\section{connection between restriction estimates for spheres and $|\Delta_k(E_1,\ldots,  E_k)|$}
 Theorem \ref{FormulaD} shows that  a good lower bound of $|\Delta_k(E_1,E_2,\ldots,E_k)|$ can be obtained by estimating an upper bound of the quantity
  \begin{equation}\label{kFourer}\max\limits_{r\in \mathbb F_q^*}\prod\limits_{j=1}^k\left(\sum\limits_{{\bf v}\in S_r}  |\widehat{E_j}({\bf v})|^k \right)^{\frac{1}{k}}.\end{equation}
 This quantity is closely related to the restriction estimates for spheres with non-zero radius. In this section, we review the restriction problem for spheres and we restate Theorem \ref{FormulaD} in terms of the restriction estimates for spheres.
We begin by reviewing the extension problem for spheres which is also called the dual restriction problem for spheres.  We shall use the notation $(\mathbb F_q^d, d{\bf x})$ to denote the $d$-dimensional vector space over the finite field $\mathbb F_q$ where a normalized counting measure $d{\bf x}$ is given. On the other hand, we denote by $(\mathbb F_q^d, d{\bf m})$ the dual space of the vector space $(\mathbb F_q^d, d{\bf x}),$ where we endow the dual space $(\mathbb F_q^d, d{\bf m})$ with the counting measure $d{\bf m}.$ Since the space $(\mathbb F_q^d, d{\bf x})$ can be identified with its dual space $(\mathbb F_q^d, d{\bf x})$ as an abstract group,
we shall use the notation $\mathbb F_q^d$ to indicate both the space and its dual space. To distinguish the space with its dual space, we always use the variable ${\bf x}$ for the element of the space $(\mathbb F_q^d, d{\bf x})$ with the normalized counting measure $d{\bf x}.$ On the other hand, the variable ${\bf m}$ will be used to denote the element of the dual space $(\mathbb F_q^d, d{\bf m})$ with the counting measure $d{\bf m}.$ For example, we write ${\bf x}\in \mathbb F_q^d$ and ${\bf m}\in \mathbb F_q^d$ for ${\bf x}\in (\mathbb F_q^d, d{\bf x})$ and ${\bf m}\in (\mathbb F_q^d, d{\bf m})$, respectively. With these notations, the classical norm notation can be used to indicate the following sums: for $1\le r<\infty,$
\begin{align*}
&\|g\|_{L^r(\mathbb F_q^d, d\textbf{m})}^r = \sum_{\textbf{m} \in \mathbb F_q^d} |g(\textbf{m})|^r,\\
& \|f\|_{L^r(\mathbb F_q^d, d\textbf{x})}^r = q^{-d}\,\sum_{\textbf{x} \in \mathbb F_q^d} |f(\textbf{x})|^r,
\end{align*}
and
\begin{equation*} \|g\|_{L^\infty(\mathbb F_q^d, d\textbf{m})} =\max_{\textbf{m} \in \mathbb F_q^d} |g(\textbf{m})|.
\end{equation*}
where $g$ is a function on $(\mathbb F_q^d, d{\bf m})$ and $f$ is a function on $(\mathbb F_q^d, d{\bf x}).$
For each $t\in \mathbb F_q^*,$ let $S_t\subset (\mathbb F_q^d, dx)$ be the sphere defined as in \eqref{defS_t}. We endow the sphere $S_t$ with the normalized surface measure $d\sigma$ which is defined by measuring the mass of each point on $S_t$ as ${1}/{|S_t|}.$ Notice that the total mass of $S_t$ is $1$ and we have
\begin{align}\label{normSdef}
&\| f\|_{L^r(S_t,  d\sigma)}^r=\frac{1}{|S_t|} \sum_{\textbf{x}\in S_t} |f(\textbf{x})|^r\quad \mbox{for}~~ 1\le r<\infty,\\
\nonumber& \| f\|_{L^\infty(S_t,  d\sigma)}= \max_{\textbf{x} \in S_t} |f(\textbf{x})|.\end{align}

We also recall that if $f:(S_t, d\sigma) \to \mathbb C,$ then the inverse Fourier transform of  $fd\sigma$ is defined by
$$
(fd\sigma)^{\vee}(\textbf{m})  =\frac{1}{|S_t|} \sum_{\textbf{x}\in S_t}f(\textbf{x}) \chi(\textbf{m}\cdot \textbf{x})\quad\mbox{for}~~\textbf{m}\in (\mathbb F_q^d, d\textbf{m}).
$$
Since the sphere $S_t$ is symmetric about the origin, we can write
\begin{equation*}\label{IFT} (d\sigma)^{\vee}(\textbf{m}) = \frac{q^d}{|S_t|} \widehat{S_t}(\textbf{m})\quad\mbox{for}~~\textbf{m}\in (\mathbb F_q^d, d\textbf{m}).\end{equation*}
With the above notation,  the extension problem for the sphere $S_t$ asks us to determine $1\leq p, r\leq \infty$ such that there exists  $C>0$ satisfying the following extension estimate:
\begin{equation}\label{m1}
\| (fd\sigma)^\vee \|_{L^r(\mathbb F_q^d, d\textbf{m})} \le C \|f\|_{L^p(S_t, d\sigma)} \quad\mbox{for all} ~~
f: S_t \to \mathbb C,
\end{equation}
where the constant $C>0$ may depend on $p, r, d, S_t,$ but it must be  independent of the functions $f$ and the size of the underlying finite field $\mathbb F_q.$ By duality, this extension estimate is the same as the following restriction estimate (see \cite{MT04,Ta04}) :
\begin{equation}\label{m2}
 \|\widetilde{g}\|_{L^{p^\prime}(S_t, d\sigma)}\leq C \|g\|_{L^{r^\prime}(\mathbb F_q^d, d\textbf{m})}
\quad\mbox{for all} ~~g: \mathbb F_q^d \to \mathbb C,
\end{equation}
where $\widetilde{g}$ is defined as in \eqref{gtile} and
$p^{\prime}$, $r^{\prime}$  denote the H\"{o}lder conjugates of $p$ and $r,$ respectively (namely, $1/p+1/p^{\prime}=1$ and $1/r+1/r^{\prime}=1).$\\

Now, we address the relation between the restriction estimates for spheres with non-zero radius  and a lower bound of $|\Delta_k(E_1, \ldots, E_k)|.$
By Theorem \ref{FormulaD} and the definition of the restriction estimates for spheres in \eqref{m2}, we obtain the following result.

\begin{lemma}\label{Formulalem} For even $d\ge 2$ and an integer $k\ge 2$, let $E_j \subset \mathbb F_q^d$ for $j=1,2,\ldots,k.$ Assume that   $\prod\limits_{j=1}^k |E_j| \ge 3^k q^{\frac{dk}{2}}$ and the following restriction estimate holds for some $1\le \ell < \infty$ and $\alpha \in \mathbb R$:
 \begin{equation}\label{E_jrestriction}\|\widetilde{E_j}\|_{L^{k}(S_r, d\sigma)}\lesssim q^\alpha  \|E_j\|_{L^{\ell}(\mathbb F_q^d, d\textbf{m})}
\quad\mbox{for all}~~ r\in \mathbb F_q^*,~~j=1,2,\ldots,k. \end{equation}
Then we have
$$ |\Delta_{k}(E_1, \ldots, E_k)| \gtrsim
\min\left\{q, ~ \frac{\left( \prod\limits_{j=1}^k |E_j| \right)^{\frac{k+1}{k}-\frac{1}{\ell}}} { q^{k\alpha+d-1}}\right\}.$$
\end{lemma}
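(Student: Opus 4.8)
The plan is to feed the restriction hypothesis \eqref{E_jrestriction} into the conclusion of Theorem \ref{FormulaD}. Theorem \ref{FormulaD} already reduces the problem to bounding the quantity
$$\max_{r\in \mathbb F_q^*} \prod_{j=1}^k \left(\sum_{{\bf v}\in S_r} |\widehat{E_j}({\bf v})|^k\right)^{\frac{1}{k}},$$
so the only remaining task is to convert the assumed $L^k(S_r,d\sigma)\to L^\ell(\mathbb F_q^d,d{\bf m})$ restriction estimate into an upper bound on this product. The key translation step is to rewrite everything in terms of the normalized quantities used in Section 4. First I would note that $\widetilde{E_j}({\bf v}) = q^d\,\widehat{E_j}({\bf v})$ by comparing \eqref{gtile} with the definition of the normalized Fourier transform, and that $\|E_j\|_{L^\ell(\mathbb F_q^d,d{\bf m})}^\ell = \sum_{{\bf m}} |E_j({\bf m})|^\ell = |E_j|$ since $E_j$ is an indicator function, so $\|E_j\|_{L^\ell(\mathbb F_q^d,d{\bf m})} = |E_j|^{1/\ell}$.

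Next I would unpack the left side of \eqref{E_jrestriction}. By \eqref{normSdef},
$$\|\widetilde{E_j}\|_{L^k(S_r,d\sigma)}^k = \frac{1}{|S_r|}\sum_{{\bf v}\in S_r} |\widetilde{E_j}({\bf v})|^k = \frac{q^{dk}}{|S_r|}\sum_{{\bf v}\in S_r} |\widehat{E_j}({\bf v})|^k.$$
Since $|S_r| = q^{d-1}(1+o(1))$ for $r\in\mathbb F_q^*$ and $d\ge 3$ (the case $d=2$ being excluded as $d$ is even and at least $2$ — actually here one should simply use $|S_r|\gtrsim q^{d-1}$, which holds for all even $d\ge 2$), the hypothesis \eqref{E_jrestriction} gives
$$\sum_{{\bf v}\in S_r} |\widehat{E_j}({\bf v})|^k \lesssim q^{k\alpha - dk}\,|S_r|\, |E_j| \lesssim q^{k\alpha - dk + d - 1}\,|E_j|$$
for every $r\in\mathbb F_q^*$ and every $j$. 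Taking the $k$-th root and multiplying over $j=1,\dots,k$ yields
$$\max_{r\in\mathbb F_q^*}\prod_{j=1}^k\left(\sum_{{\bf v}\in S_r}|\widehat{E_j}({\bf v})|^k\right)^{\frac1k} \lesssim q^{k\alpha - dk + d - 1}\prod_{j=1}^k |E_j|^{\frac1k}.$$

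Finally I would substitute this bound into the denominator of the expression in Theorem \ref{FormulaD}. The denominator there is $q^{dk}$ times the displayed maximum, which is now $\lesssim q^{dk}\cdot q^{k\alpha-dk+d-1}\prod_j|E_j|^{1/k} = q^{k\alpha+d-1}\prod_j|E_j|^{1/k}$. Hence the fraction inside the $\min$ becomes
$$\frac{\left(\prod_j|E_j|\right)^{\frac{k+1}{k}}}{q^{k\alpha+d-1}\left(\prod_j|E_j|\right)^{\frac1k}} = \frac{\left(\prod_j|E_j|\right)^{\frac{k+1}{k}-\frac1k}}{q^{k\alpha+d-1}},$$
which is exactly the claimed lower bound. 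The argument is essentially bookkeeping; the only point that requires a little care is the normalization mismatch between $\widetilde{\cdot}$ and $\widehat{\cdot}$ and between the measures $d\sigma$, $d{\bf m}$, together with the harmless insertion of the factor $|S_r|\sim q^{d-1}$. There is no serious obstacle here, since all the analytic content has already been absorbed into Theorem \ref{FormulaD} and into the hypothesis \eqref{E_jrestriction}; the one thing to double-check is that the implied constants stay independent of $q$, which follows because the restriction constant in \eqref{E_jrestriction} is by definition independent of $q$ and the ratio $|S_r|/q^{d-1}$ is bounded above and below uniformly in $q$.
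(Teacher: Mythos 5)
Your strategy is exactly the paper's: feed the restriction hypothesis \eqref{E_jrestriction} into Theorem \ref{FormulaD}, using $\widetilde{E_j}=q^d\widehat{E_j}$, $\|E_j\|_{L^\ell(\mathbb F_q^d,d\mathbf m)}=|E_j|^{1/\ell}$, and $|S_r|\sim q^{d-1}$ to convert the maximum over $r$ into $q^{k\alpha+d-1}$ times a power of $\prod_j|E_j|$. However, there is a concrete error in the conversion step. When you raise \eqref{E_jrestriction} to the $k$-th power you must raise the right-hand side as well: since $\|E_j\|_{L^\ell}=|E_j|^{1/\ell}$, the hypothesis gives
\begin{equation*}
\sum_{{\bf v}\in S_r}|\widehat{E_j}({\bf v})|^k \;\lesssim\; q^{k\alpha-dk}\,|S_r|\,|E_j|^{k/\ell}\;\lesssim\; q^{k\alpha-dk+d-1}\,|E_j|^{k/\ell},
\end{equation*}
whereas you wrote $|E_j|$ in place of $|E_j|^{k/\ell}$. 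In the intended applications $\ell<k$ (e.g.\ $\ell=\frac{9d+12}{6d+14}<k=3$), so your displayed intermediate bound is strictly stronger than what the hypothesis yields and is not justified. The error propagates: after taking $k$-th roots and multiplying over $j$ the correct factor is $\bigl(\prod_j|E_j|\bigr)^{1/\ell}$, not $\bigl(\prod_j|E_j|\bigr)^{1/k}$, and the fraction inside the $\min$ becomes $\bigl(\prod_j|E_j|\bigr)^{\frac{k+1}{k}-\frac{1}{\ell}}/q^{k\alpha+d-1}$. Your final display has exponent $\frac{k+1}{k}-\frac{1}{k}$, and the claim that this ``is exactly the claimed lower bound'' is true only when $\ell=k$. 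With the exponent $k/\ell$ restored, the rest of your bookkeeping (normalizations, $|S_r|\sim q^{d-1}$, uniformity of the implied constants) is correct and reproduces the paper's proof verbatim.
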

\begin{proof}
By Theorem \ref{FormulaD}, it suffices to prove that
\begin{equation} \label{Su1}
q^{dk}\, \left(\max\limits_{r\in \mathbb F_q^*} \prod\limits_{j=1}^k\left(\sum\limits_{{\bf v}\in S_r}  |\widehat{E_j}({\bf v})|^k \right)^{\frac{1}{k}}\right) \lesssim q^{k\alpha+d-1} \left( \prod\limits_{j=1}^k |E_j| \right)^{\frac{1}{\ell}}.
\end{equation}
Since $\widehat{E_j}({\bf v}) =q^{-d}\widetilde{E_j}({\bf v})$ for $j=1,\ldots, k,$  we see that
$$ q^{dk}\, \left(\max\limits_{r\in \mathbb F_q^*} \prod\limits_{j=1}^k\left(\sum\limits_{{\bf v}\in S_r}  |\widehat{E_j}({\bf v})|^k \right)^{\frac{1}{k}}\right) = \left(\max\limits_{r\in \mathbb F_q^*} \prod\limits_{j=1}^k\left(\sum\limits_{{\bf v}\in S_r}  |\widetilde{E_j}({\bf v})|^k \right)^{\frac{1}{k}}\right).$$
 Using the definition of $\|\widetilde{E_j}\|_{L^{k}(S_r, d\sigma)}$ in \eqref{normSdef} and the fact that $|S_r|\sim q^{d-1},$  the above quantity is similar to the following value:
 $$ q^{d-1} \left(\max\limits_{r\in \mathbb F_q^*} \prod\limits_{j=1}^k \|\widetilde{E_j}\|_{L^k(S_r, d\sigma)} \right).$$
 By assumption \eqref{E_jrestriction}, this can be dominated by
 $$ q^{d-1} \left(\prod\limits_{j=1}^k \left(q^\alpha\|E_j\|_{L^{\ell}(\mathbb F_q^d, d\textbf{m})} \right)\right)= q^{k\alpha+d-1} \left( \prod\limits_{j=1}^k |E_j|^{\frac{1}{\ell}}\right) =q^{k\alpha+d-1} \left( \prod\limits_{j=1}^k |E_j| \right)^{\frac{1}{\ell}}.$$
 Putting all estimates together yields the inequality \eqref{Su1}, which completes the proof.
\end{proof}

\section{Restriction theorems for spheres}
We see from Lemma \ref{Formulalem} that  the restriction estimates for spheres play an important role in determining  lower bounds of the  cardinality of the generalized $k$-resultant set $\Delta_k(E_1, \ldots, E_k).$  In particular, our main result (Theorem \ref{mainthm}) will be proved by making an effort on finding possibly large exponent $\ell \ge 1$  such that the restriction inequality \eqref{E_jrestriction} holds for $k=3$ or $k=4.$
In this section, we shall obtain such restriction estimates.
To this end, we shall apply the following dual restriction estimate for spheres with non-zero radius due to the authors in \cite{IK10}.
\begin{lemma}[\cite{IK10}, Theorem 1]\label{exkoh}
If  $d\geq 4$ be even, then
\begin{equation}\label{R3.1}
 \|(Fd\sigma)^\vee\|_{L^4(\mathbb F_q^d, d\textbf{m})} \lesssim \|F\|_{L^{(12d-8)/{(9d-12)}}(S_t, d\sigma)} \quad \mbox{for all} ~~F\subset S_t , ~t\neq 0.
\end{equation}
\end{lemma}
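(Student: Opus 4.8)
I would work directly with the extension estimate as stated. Write $p=\frac{12d-8}{9d-12}$; since both sides are homogeneous in $F$, a standard dyadic decomposition reduces matters to characteristic functions $F=1_E$ with $E\subseteq S_t$, i.e.\ to proving $\|(1_Ed\sigma)^\vee\|_{L^4(\mathbb F_q^d,d\mathbf m)}\lesssim (|E|/|S_t|)^{1/p}$. First I would turn the left-hand side into an additive-energy count: because $1_E$ is real and supported on the symmetric set $S_t$, one has $|(1_Ed\sigma)^\vee(\mathbf m)|=\tfrac{q^d}{|S_t|}|\widehat{1_E}(\mathbf m)|$ for all $\mathbf m$, and the orthogonality relation gives $q^{3d}\sum_{\mathbf m}|\widehat{1_E}(\mathbf m)|^4=\Lambda(E)$, where
$$\Lambda(E):=\big|\{(\mathbf a,\mathbf b,\mathbf c,\mathbf d)\in E^4:\ \mathbf a+\mathbf b=\mathbf c+\mathbf d\}\big|.$$
Hence $\|(1_Ed\sigma)^\vee\|_{L^4}^4=\tfrac{q^d}{|S_t|^4}\,\Lambda(E)$, and since $|S_t|\sim q^{d-1}$ the lemma becomes the additive-energy estimate $\Lambda(E)\lesssim q^{-d}|S_t|^{\frac{3d+4}{3d-2}}|E|^{\frac{9d-12}{3d-2}}$.

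Second I would record two elementary bounds for $\Lambda(E)=\sum_{\mathbf w}\mu_E(\mathbf w)^2$ with $\mu_E(\mathbf w)=|\{(\mathbf a,\mathbf b)\in E^2:\mathbf a+\mathbf b=\mathbf w\}|$. From $\mu_E(\mathbf w)\le|E|$ and $\sum_{\mathbf w}\mu_E(\mathbf w)=|E|^2$ one gets $\Lambda(E)\le|E|^3$. On the other hand $\mu_E(\mathbf w)\le|S_t\cap(\mathbf w-S_t)|$, and for $t\ne 0$, $\mathbf w\ne 0$ the set $S_t\cap(\mathbf w-S_t)$ is exactly the set of points of $S_t$ on the affine hyperplane $\{\mathbf a:\ 2\mathbf a\cdot\mathbf w=\|\mathbf w\|\}$, hence has $O(q^{d-2})$ points; together with $\mu_E(0)\le|E|$ this gives $\Lambda(E)\lesssim q^{d-2}|E|^2$. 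The bound $\Lambda(E)\lesssim\min\{|E|^3,\,q^{d-2}|E|^2\}$ already delivers the target inequality when $|E|$ is small and when $|E|$ is comparable to $|S_t|$, but it leaves a genuine gap on a wide intermediate range of sizes of $|E|$.

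Closing that gap is the heart of the matter, and it is the single place where the even-dimensional hypothesis is used. Fourier-expanding, $\Lambda(E)=q^{-d}|E|^4+q^{3d}\sum_{\mathbf m\ne 0}|\widehat{1_E}(\mathbf m)|^4$; the first term contributes only $(|E|/|S_t|)^4\le(|E|/|S_t|)^{4/p}$ to $\|(1_Ed\sigma)^\vee\|_{L^4}^4$ and is harmless, so everything comes down to estimating $\sum_{\mathbf m\ne 0}|\widehat{1_E}(\mathbf m)|^4$ for subsets of the sphere. At this point Lemma~\ref{P2.1}, combined with the Weil bound for Kloosterman sums, yields for even $d$ and $t\ne 0$ the decay $|\widehat{S_t}(\mathbf m)|\lesssim q^{-(d+1)/2}$ \emph{uniformly} in $\mathbf m\ne 0$ — equivalently $|(d\sigma)^\vee(\mathbf m)|\lesssim q^{-(d-1)/2}$ — and this uniformity is exactly the gain that even dimensions provide over odd ones. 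I would feed this decay into a Stein--Tomas-type $TT^*$ argument, the operator being convolution against $(d\sigma)^\vee$, applied to the part of $\widehat{1_E}$ supported away from the origin; this produces an $L^2$-based bound of the shape $\Lambda(E)\lesssim q^{-\gamma}|E|^{\beta}$ with a genuine power saving $\gamma>0$, and interpolating it against the two elementary bounds and optimizing over all sizes of $|E|$ reproduces precisely the exponent $\frac{9d-12}{3d-2}$. The passage from $F=1_E$ back to general $F$ is by summation over dyadic level sets.

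\textbf{Main obstacle.} The hard step is the intermediate range: one must bound the additive energy of an \emph{arbitrary} subset $E\subseteq S_t$ by an expression that interpolates correctly between $\Lambda(E)\approx|E|^3$ (for small $E$) and $\Lambda(E)\approx q^{d-2}|E|^2$ (for $E=S_t$). The crude pointwise bound $|\widehat{1_E}(\mathbf m)|\le q^{-d}|E|$ together with Plancherel is not sharp enough here; what is needed is genuine square-root cancellation in the Kloosterman sums occurring in $\widehat{S_t}$, which is the arithmetic input that only the even-dimensional case supplies and which is the origin of the hypothesis that $d\ge 4$ be even.
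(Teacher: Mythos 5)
First, a point of orientation: the paper does not prove Lemma \ref{exkoh} at all --- it is quoted verbatim from \cite{IK10} (Theorem 1 there), so what you are proposing is a from-scratch proof of that cited result. Your opening reductions are correct: since \eqref{R3.1} concerns subsets $F\subset S_t$, the identity $\|(1_Ed\sigma)^\vee\|_{L^4(\mathbb F_q^d,d\mathbf m)}^4=\frac{q^d}{|S_t|^4}\Lambda(E)$ is exactly right, the two elementary bounds $\Lambda(E)\le |E|^3$ and $\Lambda(E)\lesssim q^{d-2}|E|^2$ are correct, and they do cover only the ranges $|E|\lesssim q^{d/2-2/3}$ and $|E|\gtrsim q^{d-1}$. (Your dyadic-level-set remark is beside the point here, since the lemma is stated for sets; note also that summing over level sets would only recover a Lorentz $L^{p,1}$ bound for general functions, which is precisely why the paper carries Lemma \ref{Lp1} separately.)

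The genuine gap is the step you yourself flag as the heart of the matter. Your proposed mechanism for the intermediate range --- feed the uniform decay $|\widehat{S_t}(\mathbf m)|\lesssim q^{-(d+1)/2}$, $\mathbf m\neq 0$, into a Stein--Tomas $TT^*$ argument --- cannot produce the stated exponent, for two reasons. First, that uniform decay is not an even-dimensional phenomenon: it holds for all $d\ge 2$ and $t\neq 0$ (it is Lemma 2.2 of \cite{IR07}, invoked without any parity hypothesis in the proof of Lemma \ref{minj1} of this very paper; in odd dimensions the relevant sum is a Sali\'e sum with the same square-root cancellation). So it cannot be ``the gain that even dimensions provide,'' and an argument resting only on it would prove \eqref{R3.1} in odd dimensions as well, where the exponent $\frac{12d-8}{9d-12}$ is false. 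Second, Stein--Tomas with that decay yields exactly the $L^2\to L^4$ extension estimate, and for indicator functions this gives $\Lambda(E)\lesssim q^{-d}|S_t|^2|E|^2\sim q^{d-2}|E|^2$ --- i.e.\ nothing beyond the elementary hyperplane bound you already have, and in particular no power saving on the intermediate range. The actual even-dimensional input in \cite{IK10} is finer: for $d$ even, $\widehat{S_t}(\mathbf m)$ is (up to the $\delta_0$ term) $q^{-d-1}G^d$ times a Kloosterman sum depending only on $\|\mathbf m\|$, with the much stronger bound $q^{-\frac d2-1}$ when $\|\mathbf m\|=0$, and the proof of their Theorem 1 exploits this structure through a decomposition according to the value of $\|\mathbf m\|$ (equivalently, a refined count of $|S_t\cap(\mathbf w-S_t)|$ beyond the uniform $O(q^{d-2})$), not through a $TT^*$ scheme driven by uniform decay. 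As written, your argument establishes the endpoints but leaves the decisive intermediate-range energy estimate unproved, with a proposed mechanism that demonstrably falls short; the honest fix within this paper's framework is simply to cite \cite{IK10} as the authors do.
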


To obtain a restriction estimate for spheres, we shall use the dual estimate of \eqref{R3.1}. To this end, it is useful to review Lorentz spaces in our setting. For a function $f: (S_t, d\sigma) \to \mathbb C,$ we denote by
$d_f$ the distribution function on $[0, \infty)$:
$$ d_f(a):= \frac{1}{|S_t|} \left|\{{\bf x}\in S_t: |f({\bf x})|>a\}\right|.$$
We see that for $1\le r\le \infty,$
$$\|f\|_{L^r(S_t, d\sigma)}^r= r\int_0^\infty s^{r-1} d_{f}(s)~ds.$$

The function $f^*$ is defined on $[0, \infty)$ by
$$ f^*(s):= \inf\{a>0: d_f(a)\le s\}.$$
For $1\le p,r\le \infty$ and a function $f:(S_t, d\sigma)\to \mathbb C$, define
$$\|f\|_{L^{p,r}(S_t, d\sigma)} :=\left\{\begin{array}{ll} \left(\int\limits_{0}^\infty \left( s^{1/p} f^*(s) \right)^r \frac{ds}{s}\right)^{1/r}\quad&\mbox{for}~~ 1\le r<\infty\\
\sup\limits_{s>0} s^{1/p} f^*(s) \quad &\mbox{for}~~ r=\infty. \end{array} \right.$$
In particular, we see that
$$ \|f\|_{L^{p,1}(S_t, d\sigma)}=\int_0^\infty s^{1/p-1} f^*(s) ~ds.$$
It is not hard to see that for $1\le p\le \infty$ and $1\le r_1\le r_2\le \infty$,
$$\|f\|_{L^{p,r_2}(S_t, d\sigma)} \lesssim \|f\|_{L^{p,r_1}(S_t, d\sigma)} \quad \mbox{and}\quad\|f\|_{L^{p,p}(S_t, d\sigma)} = \|f\|_{L^{p}(S_t, d\sigma)}.$$
See \cite{Gr04} for further information about Lorentz spaces.
With the above notation, the following fact can be deduced.
\begin{lemma} \label{Lp1}
Let $d\sigma$ be the normalized surface measure on the sphere $S_t\subset (\mathbb F_q^d, d{\bf x}).$ Assume that the estimate
\begin{equation}\label{ass5.2}\|(Fd\sigma)^\vee\|_{L^r(\mathbb F_q^d, d{\bf m})}\lesssim \|F\|_{L^p(S_t, d\sigma)}\end{equation}
holds for all subsets $F$ of $S_t.$ Then we have
$$ \|(fd\sigma)^\vee\|_{L^r(\mathbb F_q^d, d{\bf m})}\lesssim \|f\|_{L^{p,1}(S_t, d\sigma)} $$
for all functions $f: (S_t, d\sigma)\to \mathbb C.$
\end{lemma}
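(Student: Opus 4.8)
The plan is to upgrade the restricted estimate \eqref{ass5.2} to the Lorentz bound by a standard layer-cake (real interpolation) argument. Since $S_t$ is a finite set, every ``integral over level sets'' appearing below is really a finite sum, so no convergence issue arises and Minkowski's integral inequality reduces to the ordinary triangle inequality. First I would reduce to the case where $f$ is real and nonnegative: writing $f=(\operatorname{Re}f)^+-(\operatorname{Re}f)^-+i(\operatorname{Im}f)^+-i(\operatorname{Im}f)^-$ and using linearity of the extension operator $F\mapsto (Fd\sigma)^\vee$ together with the triangle inequality in $L^r(\mathbb F_q^d, d{\bf m})$ (valid since $r\ge 1$; in the application $r=4$), it is enough to bound $\|(fd\sigma)^\vee\|_{L^r}$ for $f\ge 0$, because each of the four pieces is pointwise dominated by $|f|$ and hence has $L^{p,1}(S_t,d\sigma)$ quasi-norm at most $\|f\|_{L^{p,1}(S_t,d\sigma)}$, the Lorentz quasi-norm being monotone under pointwise domination of absolute values.

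Next, for $f\ge 0$ I would invoke the layer-cake representation $f({\bf x})=\int_0^\infty 1_{\{f>a\}}({\bf x})\,da$, which after interchanging the (finite) sum defining $(\cdot\, d\sigma)^\vee$ with the $a$-integral gives $(fd\sigma)^\vee=\int_0^\infty (1_{\{f>a\}}d\sigma)^\vee\,da$. Applying Minkowski's integral inequality in $L^r(\mathbb F_q^d, d{\bf m})$ and then the hypothesis \eqref{ass5.2} to each set $F=\{{\bf x}\in S_t: f({\bf x})>a\}\subset S_t$ yields
\begin{equation*}
\|(fd\sigma)^\vee\|_{L^r(\mathbb F_q^d, d{\bf m})}\le \int_0^\infty \|(1_{\{f>a\}}d\sigma)^\vee\|_{L^r(\mathbb F_q^d, d{\bf m})}\,da \lesssim \int_0^\infty \left(\frac{|\{f>a\}|}{|S_t|}\right)^{1/p}da=\int_0^\infty d_f(a)^{1/p}\,da.
\end{equation*}

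Finally I would identify the last integral with the Lorentz quasi-norm via the standard reformulation $\|f\|_{L^{p,1}(S_t,d\sigma)}\sim \int_0^\infty d_f(a)^{1/p}\,da$ (which follows from the equimeasurability of $f$ and $f^*$ together with Tonelli's theorem; see \cite{Gr04}). Combining this with the previous display and the reduction of the first paragraph completes the proof. I do not expect any serious obstacle here: the only points that need a word of care are the use of Minkowski's integral inequality and of the reformulation of $\|\cdot\|_{L^{p,1}}$, and both are entirely routine because everything takes place on the finite set $S_t$, on which $d_f$ is a nonincreasing step function with finitely many jumps.
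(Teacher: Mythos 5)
Your proof is correct and takes essentially the same route as the paper's: both decompose a nonnegative function into indicators of its superlevel sets, apply the restricted estimate \eqref{ass5.2} to each set, sum or integrate via the triangle/Minkowski inequality, and identify the result with $\|f\|_{L^{p,1}(S_t,d\sigma)}$. The only differences are cosmetic --- the paper uses the discrete version (a simple function written as a finite sum of nested indicators, with $f^*$ computed explicitly) while you use the continuous layer-cake formula together with the standard identity $\|f\|_{L^{p,1}}\sim\int_0^\infty d_f(a)^{1/p}\,da$, and you spell out the reduction of complex $f$ to the nonnegative case that the paper passes over with ``without loss of generality.''
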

\begin{proof}
Without loss of generality, we may assume that $f$ is a nonnegative simple function given by the form
\begin{equation} \label{reductionf} f=\sum_{j=1}^N a_j 1_{F_j}\end{equation}
where $F_N \subset F_{N-1} \subset \cdots \subset F_2 \subset F_1$ and $a_j>0$ for all $j=1,2,\ldots, N.$ Notice that
$$f^*(s)=\sum_{j=1}^N a_j ~1_{\left[0, \frac{|F_j|}{|S_t|}\right]}(s).$$
It follows that
$$ \int_{0}^\infty s^{\frac{1}{p} -1} f^*(s)~ds
= \int_0^\infty s^{\frac{1}{p} -1} \sum_{j=1}^N a_j ~1_{\left[0, \frac{|F_j|}{|S_t|}\right]}(s)~ds
$$
$$= \sum_{j=1}^N a_j\int_0^{\frac{|F_j|}{|S_t|}} s^{\frac{1}{p} -1}~ds
=p\sum_{j=1}^N a_j \left(\frac{|F_j|}{|S_t|}\right)^{\frac{1}{p}} =p \sum_{j=1}^N a_j \|F_j\|_{L^p(S_t, d\sigma)}.$$
Namely, we see that
$$ \int_{0}^\infty s^{\frac{1}{p} -1} f^*(s)~ds \sim \sum_{j=1}^N a_j \|F_j\|_{L^p(S_t, d\sigma)}.$$
Using this estimate along with \eqref{reductionf} and the hypothesis \eqref{ass5.2},  we see that
\begin{align*} \|(fd\sigma)^\vee\|_{L^r(\mathbb F_q^d, d{\bf m})}&\le \sum_{j=1}^N a_j \,\|(F_jd\sigma)^\vee\|_{L^r(\mathbb F_q^d, d{\bf m})}\\
 &\lesssim \sum_{j=1}^N a_j \,\|F_j\|_{L^p(S_t, d\sigma)} \sim \int_{0}^\infty s^{\frac{1}{p} -1} f^*(s)~ ds =\|f\|_{L^{p,1}(S_t, d\sigma)}.\end{align*}
Hence, the proof is complete.\end{proof}

We shall invoke the following weak-type restriction estimate.
\begin{lemma} \label{Insook1} If $d\ge 4$ is even and we put $r_0=(12d-8)/(3d+4)$, then the weak-type restriction estimate
\begin{equation}\label{dres} \|\widetilde{g}\|_{L^{r_0, \infty}(S_t, d\sigma)}\lesssim \|g\|_{L^{\frac{4}{3}}(\mathbb F_q^d, d\textbf{m})}\end{equation}
holds for all $t\in \mathbb F_q^*$ and for all functions $g:(\mathbb F_q^d, d\textbf{m}) \to \mathbb C.$
\end{lemma}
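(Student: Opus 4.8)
The plan is to derive the weak-type restriction estimate \eqref{dres} from the $L^4$ extension estimate of Lemma~\ref{exkoh} by duality, combined with a distribution-function argument. The key observation is the exponent arithmetic: with $p=(12d-8)/(9d-12)$ denoting the extension exponent appearing in \eqref{R3.1}, we have $p-1=(3d+4)/(9d-12)$, hence the H\"older conjugate is $p'=(12d-8)/(3d+4)=r_0$, while the conjugate of the $L^4$ target is $4/3$. Thus \eqref{dres} is precisely the dual statement of Lemma~\ref{exkoh}, once the latter is lifted to a Lorentz bound.

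First I would apply Lemma~\ref{Lp1} with $r=4$ and $p=(12d-8)/(9d-12)$ to upgrade Lemma~\ref{exkoh} to
$$\|(fd\sigma)^\vee\|_{L^4(\mathbb F_q^d, d\textbf{m})}\lesssim \|f\|_{L^{p,1}(S_t, d\sigma)}\qquad\text{for all}~~f:(S_t,d\sigma)\to\mathbb C~~\text{and all}~~t\ne 0.$$
Next I would record the adjoint relation between the extension and restriction operators: substituting the definition of $(fd\sigma)^\vee$ and interchanging the order of summation, then recognizing the inner sum as $\overline{\widetilde{g}}$ via the definition of $\widetilde{g}$, one obtains for $f:S_t\to\mathbb C$ and $g:\mathbb F_q^d\to\mathbb C$ the identity
$$\sum_{\textbf{m}\in\mathbb F_q^d}(fd\sigma)^\vee(\textbf{m})\,\overline{g(\textbf{m})}=\frac{1}{|S_t|}\sum_{\textbf{x}\in S_t}f(\textbf{x})\,\overline{\widetilde{g}(\textbf{x})}.$$

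With these two ingredients, fix $g:\mathbb F_q^d\to\mathbb C$ and $\lambda>0$, and put $F=\{\textbf{x}\in S_t:|\widetilde{g}(\textbf{x})|>\lambda\}$, so that $d_{\widetilde g}(\lambda)=|F|/|S_t|$. Test the adjoint relation against the function $f=\operatorname{sgn}(\widetilde{g})\,1_F$ on $S_t$, which satisfies $|f|=1_F$ and $f\,\overline{\widetilde g}=|\widetilde g|\,1_F$. Using H\"older's inequality in the $L^4$--$L^{4/3}$ duality on $(\mathbb F_q^d, d\textbf{m})$, followed by the upgraded extension estimate together with the elementary identity $\|1_F\|_{L^{p,1}(S_t,d\sigma)}\sim (|F|/|S_t|)^{1/p}$ (computed exactly as in the proof of Lemma~\ref{Lp1}, and using $\|f\|_{L^{p,1}}=\|1_F\|_{L^{p,1}}$), we get
$$\lambda\,\frac{|F|}{|S_t|}\le\frac{1}{|S_t|}\sum_{\textbf{x}\in S_t}|\widetilde g(\textbf{x})|\,1_F(\textbf{x})=\left|\sum_{\textbf{m}\in\mathbb F_q^d}(fd\sigma)^\vee(\textbf{m})\,\overline{g(\textbf{m})}\right|\le\|(fd\sigma)^\vee\|_{L^4}\,\|g\|_{L^{4/3}}\lesssim\left(\frac{|F|}{|S_t|}\right)^{1/p}\|g\|_{L^{4/3}}.$$
Rearranging yields $\lambda\,\big(|F|/|S_t|\big)^{1-1/p}\lesssim\|g\|_{L^{4/3}}$, i.e.\ $d_{\widetilde g}(\lambda)\lesssim\big(\|g\|_{L^{4/3}}/\lambda\big)^{p'}$ for every $\lambda>0$. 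Since $p'=r_0$, this is exactly $\|\widetilde{g}\|_{L^{r_0,\infty}(S_t,d\sigma)}\lesssim\|g\|_{L^{4/3}(\mathbb F_q^d,d\textbf{m})}$, which proves the lemma.

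There is no substantive obstacle in this argument: all of the analytic content sits in the quoted extension estimate of Lemma~\ref{exkoh} (and in Lemma~\ref{Lp1}, already established), and the remaining steps use only H\"older's inequality and a one-line computation of a Lorentz norm. The two points requiring a little attention are (i) confirming that every implied constant is independent of $q$, which is automatic because Lemmas~\ref{exkoh} and \ref{Lp1} carry $q$-independent constants and the rest of the proof adds only Hölder and the indicator-function identity, and (ii) the exponent bookkeeping $p'=r_0$ recorded at the outset. As an alternative to the distribution-function computation, one may simply invoke the Lorentz-space duality $(L^{p,1}(S_t,d\sigma))^*=L^{p',\infty}(S_t,d\sigma)$ (with constant depending only on $p$) and read off \eqref{dres} directly from the upgraded extension estimate and the adjoint relation; the version above is chosen to be self-contained.
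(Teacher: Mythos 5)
Your proof is correct and follows the paper's own route: upgrade the $L^4$ extension estimate of Lemma~\ref{exkoh} to a Lorentz $L^{p,1}$ bound via Lemma~\ref{Lp1} with $p=(12d-8)/(9d-12)$, then dualize using $p'=r_0$. The only difference is that you execute the duality step explicitly through the distribution-function and sign-function argument, whereas the paper invokes it in one line; this is a harmless, more self-contained rendering of the same argument.
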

\begin{proof}
Since $r_0=(12d-8)/(3d+4)$,  its dual exponent $r_0^\prime$ is given by
$$r_0^\prime=(12d-8)/(9d-12).$$
Combining Lemma \ref{exkoh} with Lemma \ref{Lp1}, it follows that
$$ \|(fd\sigma)^\vee\|_{L^4(\mathbb F_q^d, d{\bf m})}\lesssim \|f\|_{L^{r_0^\prime,1}(S_t, d\sigma)} $$
for all functions $f: (S_t, d\sigma)\to \mathbb C$ with $t\in \mathbb F_q^*.$
By duality, this estimate is same as \eqref{dres}, which completes the proof.
\end{proof}

The following restriction estimate will play an important role in proving the third part of Theorem \ref{mainthm}.
\begin{lemma} \label{minj1} If  $E\subset (\mathbb F_q^d, d{\bf m})$ and $|E|\ge q^{\frac{d-1}{2}},$ then we have
$$ \|\widetilde{E}\|_{L^2(S_t, d\sigma)} \lesssim \frac{|E|}{q^{\frac{d-1}{4}}} \quad\mbox{for all}~~ t\in \mathbb F_q^*.$$
\end{lemma}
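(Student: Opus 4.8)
The plan is to expand $\|\widetilde{E}\|_{L^2(S_t,d\sigma)}^2$ by the orthogonality relation and reduce the estimate to a sum of Kloosterman sums. First I would write, using $|\widetilde{E}(\mathbf{x})|^2=\sum_{\mathbf{m},\mathbf{m}'\in E}\chi(-\mathbf{x}\cdot(\mathbf{m}-\mathbf{m}'))$ and summing over $\mathbf{x}\in S_t$,
\[
\sum_{\mathbf{x}\in S_t}|\widetilde{E}(\mathbf{x})|^2=\sum_{\mathbf{m},\mathbf{m}'\in E}\widetilde{S_t}(\mathbf{m}-\mathbf{m}')=q^d\sum_{\mathbf{m},\mathbf{m}'\in E}\widehat{S_t}(\mathbf{m}-\mathbf{m}'),
\]
where the symmetry of $S_t$ gives $\widetilde{S_t}=q^d\widehat{S_t}$. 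Since $t\in\mathbb F_q^*$ and $d\ge 4$ is even we have $|S_t|\sim q^{d-1}$, so it will be enough to show $q^d\sum_{\mathbf{m},\mathbf{m}'\in E}\widehat{S_t}(\mathbf{m}-\mathbf{m}')\lesssim q^{(d-1)/2}|E|^2$; dividing by $|S_t|$ and taking square roots then yields the claim.

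Next I would substitute the explicit formula for $\widehat{S_t}$ from Lemma \ref{P2.1}. The $q^{-1}\delta_0$-term contributes exactly $q^{d-1}|E|$ to the above double sum, and since $|E|\ge q^{(d-1)/2}$ we have $q^{d-1}|E|\le q^{(d-1)/2}|E|^2$ — this is the only place the hypothesis is used. For the remaining term I would group the pairs $(\mathbf{m},\mathbf{m}')\in E\times E$ according to the value $s=\|\mathbf{m}-\mathbf{m}'\|\in\mathbb F_q$, writing $N(s)$ for the number of such pairs, so that this term equals
\[
q^{-1}G^d\sum_{s\in\mathbb F_q}N(s)\sum_{\ell\in\mathbb F_q^*}\chi\!\left(t\ell+\frac{s}{4\ell}\right).
\]
For $s\ne 0$ (recall $t\ne 0$) the inner sum over $\ell$ is a Kloosterman sum, hence $O(\sqrt q)$ by the Weil bound, while for $s=0$ it equals $-1$. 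Using $|G^d|=q^{d/2}$, the trivial bound $N(0)\le|E|^2$, and $\sum_{s\in\mathbb F_q}N(s)=|E|^2$, this term is bounded in absolute value by $q^{-1}q^{d/2}\big(N(0)+2\sqrt q\,|E|^2\big)\lesssim q^{(d-1)/2}|E|^2$. Combining with the main term gives $\sum_{\mathbf{x}\in S_t}|\widetilde{E}(\mathbf{x})|^2\lesssim q^{(d-1)/2}|E|^2$, and dividing by $|S_t|\sim q^{d-1}$ and taking a square root completes the proof.

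There is no serious obstacle here: the argument is essentially the $L^2$ spherical-average computation, and the only genuine inputs are the even-dimensional formula for $\widehat{S_t}$ (Lemma \ref{P2.1}) and the Weil bound for Kloosterman sums. In particular no delicate estimate for $N(0)$ is needed — the crude count $N(0)\le|E|^2$ already suffices, and this is precisely why the threshold is $|E|\ge q^{(d-1)/2}$ (it is forced by the main term $q^{d-1}|E|$, not by the error terms). The only minor technical point is to record $|S_t|\sim q^{d-1}$ for $t\ne 0$, which is the standard count for spheres over finite fields noted after \eqref{defS_t}.
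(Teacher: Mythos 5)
Your argument is correct and follows the same skeleton as the paper's proof: expand $\|\widetilde{E}\|_{L^2(S_t,d\sigma)}^2$ via orthogonality into $q^d\sum_{\mathbf{m},\mathbf{m}'\in E}\widehat{S_t}(\mathbf{m}-\mathbf{m}')$, isolate a main term of size $q^{d-1}|E|$ which is absorbed using the hypothesis $|E|\ge q^{(d-1)/2}$ (the only place it is needed, in both proofs), and bound the rest by $q^{(d-1)/2}|E|^2$ before dividing by $|S_t|\sim q^{d-1}$. The one genuine difference is how the non-trivial part is estimated: the paper splits off the diagonal $\mathbf{m}=\mathbf{m}'$ and then simply invokes the known decay bound $\max_{\mathbf{n}\ne 0}|\widehat{S_t}(\mathbf{n})|\lesssim q^{-(d+1)/2}$ (Lemma 2.2 of \cite{IR07}), whereas you substitute the explicit even-dimensional formula of Lemma \ref{P2.1}, group the pairs by $s=\|\mathbf{m}-\mathbf{m}'\|$, and bound the resulting Kloosterman sums by the Weil estimate, handling the $s=0$ class with the crude count $N(0)\le |E|^2$ (which indeed suffices, since $q^{d/2-1}\le q^{(d-1)/2}$). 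The two routes give identical numerology; yours is self-contained modulo the Weil bound, essentially re-deriving the decay estimate the paper cites, but as written it only covers even $d$ since Lemma \ref{P2.1} is stated for even dimensions, while the paper's citation route imposes no parity restriction -- a harmless limitation here, because the lemma is only applied for even $d\ge 8$ in the proof of part (3) of Theorem \ref{mainthm}.
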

\begin{proof} Since $\|\widetilde{E}\|^2_{L^2(S_t, d\sigma)}= \frac{1}{|S_t|} \sum\limits_{{\bf x}\in S_t} |\widetilde{E}({\bf x})|^2$ and $|S_t|\sim q^{d-1}$ for $t\in \mathbb F_q^*,$  it is enough to show that if $E\subset \mathbb F_q^d$ with $|E|\ge q^{\frac{d-1}{2}},$ then
\begin{equation}\label{resres}\sum_{\textbf{x} \in S_t} |\widetilde{E}(\textbf{x})|^2 \lesssim  q^{\frac{d-1}{2}} |E|^2. \end{equation}

 Notice from the definition of the Fourier transforms that
\begin{align*} \sum_{\textbf{x} \in S_t} |\widetilde{E}(\textbf{x})|^2 &= \sum_{\textbf{x} \in S_t} \sum_{ \textbf{m}, \textbf{m}' \in E} \chi(-\textbf{x} \cdot (\textbf{m} -\textbf{m}')) =\sum_{ \textbf{m}, \textbf{m}'\in E} q^d \widehat{S_t}(\textbf{m} -\textbf{m}')\\
&= q^d |E| \widehat{S_t}(0, \ldots, 0) + \sum_{\textbf{m}, \textbf{m}' \in E: \textbf{m}\ne \textbf{m}'} q^d \widehat{S_t}(\textbf{m} -\textbf{m}')\\
&\le |E||S_t| +  \left(\max_{\textbf{n} \in \mathbb F_q^d\setminus \{(0, \ldots, 0)\}} |\widehat{S_t}(\textbf{n})| \right) \sum_{\textbf{m}, \textbf{m}' \in E:\textbf{m}\ne \textbf{m}'} q^d\\
&\lesssim |E|q^{d-1} + |E|^2 q^d \left(\max_{\textbf{n} \in \mathbb F_q^d\setminus \{(0, \ldots, 0)\}} |\widehat{S_t}(\textbf{n})| \right).\end{align*}
Now, we apply the well known fact (Lemma 2.2 in \cite{IR07}) that  if $S_t \subset \mathbb F_q^d$ for $t\in \mathbb F_q^*$ and $d\ge 2,$ then
\[\left(\max_{\textbf{n} \in \mathbb F_q^d\setminus \{(0, \ldots, 0)\}} |\widehat{S_t}(\textbf{n})| \right) \lesssim q^{-\frac{d+1}{2}}.\]
Then we see that
\begin{equation*}\sum_{\textbf{x} \in S_t} |\widetilde{E}(\textbf{x})|^2 \lesssim |E|q^{d-1} + q^{\frac{d-1}{2}} |E|^2 \lesssim q^{\frac{d-1}{2}} |E|^2, \end{equation*}
 where the last inequality follows from our assumption that $|E|\ge q^{\frac{d-1}{2}}.$ Thus, \eqref{resres} holds and we complete the proof.

\end{proof}
Now, we introduce the interpolation theorem which enables us to derive the restriction estimates we need for the proof of our main results.
\begin{theorem}\label{RTI} Let $\Omega$ be a collection of subsets $E$ of $(\mathbb F_q^d, d{\bf m}).$ Assume that the following two restriction estimates hold for all sets $E\in\Omega$ and $1\le r_0< r_1\le \infty$:
$$ \|\widetilde{E}\|_{L^{r_0,\infty}(S_t, d\sigma)} \lesssim A_0(q,|E|):=A_0$$
and
$$ \|\widetilde{E}\|_{L^{r_1,\infty}(S_t, d\sigma)} \lesssim A_1(q,|E|):=A_1.$$
Then for $r_0<r<r_1,$ we have
 \begin{equation}\label{Ma1} \|\widetilde{E}\|_{L^r(S_t, d\sigma)} \lesssim  \left(\max\left\{\frac{2r}{r-r_0},~\frac{2r}{r_1-r}\right\}\right)^{\frac{1}{r}}
A_0^{\frac{r_0(r_1-r)}{r(r_1-r_0)}} A_1^{\frac{r_1(r-r_0)}{r(r_1-r_0)}}.\end{equation}
Namely, if $\frac{1}{r}=\frac{1-\theta}{r_0} + \frac{\theta}{r_1}$ for some $0<\theta<1,$ then we have
\begin{equation}\label{Ma2}\|\widetilde{E}\|_{L^r(S_t, d\sigma)} \lesssim  A_0^{1-\theta} ~A_1^\theta.\end{equation}
\end{theorem}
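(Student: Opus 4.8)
The plan is to recognize that, despite its operator-theoretic phrasing, Theorem~\ref{RTI} is purely a statement about the single function $g=\widetilde{E}$ on the finite measure space $(S_t,d\sigma)$: if $g$ belongs to both weak spaces $L^{r_0,\infty}(S_t,d\sigma)$ and $L^{r_1,\infty}(S_t,d\sigma)$ with quasi-norms $\lesssim A_0$ and $\lesssim A_1$, then $g\in L^r(S_t,d\sigma)$ for every $r_0<r<r_1$ with the claimed quantitative bound; this is exactly the elementary inclusion $L^{r_0,\infty}\cap L^{r_1,\infty}\hookrightarrow L^r$ underlying the Marcinkiewicz interpolation theorem, carried out with explicit control of the constant. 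First I would fix $E\in\Omega$, $t\in\mathbb F_q^*$, set $g=\widetilde{E}$, and note that the definitions of $d_g$ and $g^*$ recalled above force $a\, d_g(a)^{1/p}\le \|g\|_{L^{p,\infty}(S_t,d\sigma)}$ for every $a>0$; applied with $p=r_0$ and $p=r_1$, the hypotheses of the theorem then translate into the distribution-function bounds
\[ d_g(a)\lesssim \left(\frac{A_0}{a}\right)^{r_0} \qquad\text{and}\qquad d_g(a)\lesssim \left(\frac{A_1}{a}\right)^{r_1}\qquad\text{for all } a>0. \]

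Next I would invoke the layer-cake identity $\|g\|_{L^r(S_t,d\sigma)}^r=r\int_0^\infty a^{r-1}d_g(a)\,da$ recorded above and split the integral at a free parameter $\lambda>0$. On $(0,\lambda)$ I apply the first distribution bound, which is integrable against $a^{r-1}$ precisely because $r_0<r$, and on $(\lambda,\infty)$ the second, which is integrable precisely because $r<r_1$; this yields
\[ \|g\|_{L^r(S_t,d\sigma)}^r \;\lesssim\; \frac{r}{r-r_0}\,A_0^{r_0}\lambda^{r-r_0} \;+\; \frac{r}{r_1-r}\,A_1^{r_1}\lambda^{r-r_1}. \]
Then I would choose $\lambda$ to balance the two terms, namely $\lambda=\left(A_1^{r_1}/A_0^{r_0}\right)^{1/(r_1-r_0)}$, which makes both terms equal to $A_0^{r_0(r_1-r)/(r_1-r_0)}A_1^{r_1(r-r_0)/(r_1-r_0)}$ once one simplifies the exponent of $A_0$ via $r_0-r_0\frac{r-r_0}{r_1-r_0}=r_0\frac{r_1-r}{r_1-r_0}$. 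Taking $r$-th roots and using $\frac{r}{r-r_0}+\frac{r}{r_1-r}\le \max\{\frac{2r}{r-r_0},\frac{2r}{r_1-r}\}$ gives exactly \eqref{Ma1}.

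Finally, \eqref{Ma2} follows from \eqref{Ma1} by the elementary algebraic identities $\frac{r_0(r_1-r)}{r(r_1-r_0)}=1-\theta$ and $\frac{r_1(r-r_0)}{r(r_1-r_0)}=\theta$, valid whenever $\frac1r=\frac{1-\theta}{r_0}+\frac{\theta}{r_1}$, together with the observation that the prefactor $\left(\max\{\frac{2r}{r-r_0},\frac{2r}{r_1-r}\}\right)^{1/r}$ depends only on $r_0,r,r_1$ and hence — these being fixed and independent of $q$ and of $E\in\Omega$ — may be absorbed into the symbol $\lesssim$. I do not expect a genuine obstacle here; the only delicate points are making the weak-quasi-norm-to-distribution-function passage quantitative enough that the constant in \eqref{Ma1} comes out as stated, and verifying that every implied constant remains independent of $q$ and of $E$, which is automatic since the hypotheses are assumed uniform over $\Omega$ and over all $t\in\mathbb F_q^*$.
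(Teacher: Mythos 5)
Your proposal is correct and follows essentially the same route as the paper: the layer-cake identity for $\|\widetilde{E}\|_{L^r(S_t,d\sigma)}^r$, a split of the integral at a parameter chosen to balance the two weak-type contributions (your $\lambda$ equals the paper's $\delta=A_0^{r_0/(r_0-r_1)}A_1^{r_1/(r_1-r_0)}$), and the same algebraic identification of the exponents with $1-\theta$ and $\theta$ to pass from \eqref{Ma1} to \eqref{Ma2}. The only cosmetic difference is that you first rewrite the weak quasi-norm hypotheses as pointwise distribution-function bounds before integrating, while the paper keeps $\sup_s s^{r_i}d_{\widetilde{E}}(s)$ inside the integral; the computations are identical.
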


\begin{proof} Let  $\delta>0$ which will be chosen later.
$$\|\widetilde{E}\|_{L^r(S_t, d\sigma)}^r= r\int_0^\infty s^{r-1} d_{\widetilde{E}}(s)~ds $$
$$=r\int_0^\delta s^{r-1} d_{\widetilde{E}}(s)~ds
+ r\int_\delta^\infty s^{r-1} d_{\widetilde{E}}(s)~ds $$
$$=r\int_0^\delta s^{r-r_0-1} s^{r_0} d_{\widetilde{E}}(s)~ds
+ r\int_\delta^\infty s^{r-r_1-1} s^{r_1} d_{\widetilde{E}}(s)~ds $$
$$\le r \left(\sup_{0<s<\infty} s^{r_0} d_{\widetilde{E}}(s)\right)
\int_0^\delta s^{r-r_0-1} ~ds
+ r \left(\sup_{0<s<\infty} s^{r_1} d_{\widetilde{E}}(s)\right)
\int_\delta^\infty s^{r-r_1-1} ~ds $$
$$= \frac{r}{r-r_0} \|\widetilde{E}\|^{r_0}_{L^{r_0, \infty}(S_t, d\sigma)} ~\delta^{r-r_0} + \frac{r}{r_1-r} \|\widetilde{E}\|^{r_1}_{L^{r_1, \infty}(S_t, d\sigma)} ~\delta^{r-r_1}$$
$$\lesssim \max \left\{ \frac{r}{r-r_0},~\frac{r}{r_1-r}\right\}
\left(A_0^{r_0} ~\delta^{r-r_0} + A_1^{r_1} ~\delta^{r-r_1}\right).$$
Now we choose $\delta$ such that
$$ A_0^{r_0} ~\delta^{r-r_0}=A_1^{r_1} ~\delta^{r-r_1}.$$
Namely, we choose
$$\delta=A_0^{\frac{r_0}{r_0-r_1}} A_1^{\frac{r_1}{r_1-r_0}}.$$
It follows that
\begin{align*}\|\widetilde{E}\|_{L^r(S_t, d\sigma)}^r &\lesssim \max \left\{ \frac{2r}{r-r_0},~\frac{2r}{r_1-r}\right\}A_0^{r_0} ~\delta^{r-r_0}\\
&\lesssim \max \left\{ \frac{2r}{r-r_0},~\frac{2r}{r_1-r}\right\}A_0^{r_0}   A_0^{\frac{r_0(r-r_0)}{r_0-r_1}} A_1^{\frac{r_1(r-r_0)}{r_1-r_0}}\\
&=\max \left\{ \frac{2r}{r-r_0},~\frac{2r}{r_1-r}\right\} A_0^{\frac{r_0(r_1-r)}{r_1-r_0}} A_1^{\frac{r_1(r-r_0)}{r_1-r_0}},\end{align*}
which implies \eqref{Ma1}. By a direct computation, \eqref{Ma2} follows from \eqref{Ma1}.

\end{proof}

\section{Proof of main theorem (Theorem \ref{mainthm})}
In this section, we shall give the complete proof of Theorem \ref{mainthm}.
Since $\|\widetilde{g}\|_{L^{\infty, \infty}(S_t, d\sigma)} =\|\widetilde{g}\|_{L^{\infty}(S_t, d\sigma)} = \max\limits_{{\bf x}\in S_t} |\widetilde{g}({\bf x})|\le \|g\|_{L^1(\mathbb F_q^d, d{\bf m})},$ it is clear that
\begin{equation}\label{Intrivial}
 \|\widetilde{E}\|_{L^{\infty, \infty}(S_t, d\sigma)} \lesssim \|E\|_{L^1(\mathbb F_q^d, d{\bf m})}=|E| \quad\mbox{for all}~~ E \subset \mathbb F_q^d,~t\ne 0.
\end{equation}
On the other hand, it follows from Lemma \ref{Insook1} that if $d\ge 4$ is even, then
 \begin{equation}\label{In1}\|\widetilde{E}\|_{L^{\frac{12d-8}{3d+4}, \infty}(S_t, d\sigma)}\lesssim \|E\|_{L^{\frac{4}{3}}(\mathbb F_q^d, d\textbf{m})}=|E|^{\frac{3}{4}}\quad\mbox{for all}~~ E \subset \mathbb F_q^d,~t\ne 0.\end{equation}

 \subsection{Proof of statement $(1)$ of Theorem \ref{mainthm}}
We need the following lemma.
\begin{lemma} \label{F1}
 If $d=4$ or $6$, then we have
 $$ \|\widetilde{E}\|_{L^3(S_t, d\sigma)} \lesssim
 \|E\|_{L^{\frac{9d+12}{6d+14}}(\mathbb F_q^d, d{\bf m})} \quad\mbox{for all}~~ E \subset \mathbb F_q^d,~t\ne 0. $$ \end{lemma}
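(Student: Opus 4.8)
The plan is to obtain the claimed $L^3$ restriction estimate by interpolating, via Theorem \ref{RTI}, between the weak-type estimate \eqref{In1} at the exponent $r_0 = (12d-8)/(3d+4)$ and the trivial $L^\infty$ bound \eqref{Intrivial}. The key numerical point is that for $d=4$ and $d=6$ one has $r_0 = 5/2$ and $r_0 = 32/11$ respectively, so in both cases $r_0 < 3 < \infty$; this is exactly the range of validity of Theorem \ref{RTI} with $r=3$. (For even $d \ge 8$ one instead has $r_0 \ge 22/7 > 3$, which is precisely why Lemma \ref{F1} is restricted to $d \in \{4,6\}$ and why a different route, through the $L^2$ bound of Lemma \ref{minj1}, is needed for part $(3)$ of Theorem \ref{mainthm}.)

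Concretely, I would let $\Omega$ be the collection of all subsets $E \subset (\mathbb F_q^d, d{\bf m})$, take $r_0 = (12d-8)/(3d+4)$ with $A_0 = |E|^{3/4}$ from \eqref{In1}, and take $r_1 = \infty$ with $A_1 = |E|$ from \eqref{Intrivial}; both estimates hold for every $E \in \Omega$ and every $t \ne 0$. Applying Theorem \ref{RTI} at $r = 3$, the parameter $\theta \in (0,1)$ is determined by $\frac{1}{3} = \frac{1-\theta}{r_0}$, i.e. $1-\theta = r_0/3$, so \eqref{Ma2} gives
$$ \|\widetilde{E}\|_{L^3(S_t, d\sigma)} \lesssim A_0^{1-\theta} A_1^{\theta} = |E|^{\frac{3}{4}\cdot \frac{r_0}{3}}\, |E|^{1-\frac{r_0}{3}} = |E|^{1 - \frac{r_0}{12}}. $$

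The remaining step is a routine computation: writing $1 - r_0/12 = 1/\ell$ yields $\ell = 12/(12 - r_0)$, and substituting $r_0 = (12d-8)/(3d+4)$ gives $12 - r_0 = (24d+56)/(3d+4)$, hence $\ell = 12(3d+4)/(24d+56) = (9d+12)/(6d+14)$. Since $\|E\|_{L^\ell(\mathbb F_q^d, d{\bf m})} = |E|^{1/\ell}$, this is exactly the asserted inequality, and one checks $\ell \ge 1$ for $d = 4, 6$ so that the right-hand side is a genuine norm. There is no real obstacle here beyond bookkeeping; the only point requiring care is verifying $r_0 < 3$, which is what pins the statement to $d \in \{4,6\}$.
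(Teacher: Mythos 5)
Your proposal is correct and follows exactly the paper's argument: interpolating the weak-type bound \eqref{In1} at $r_0=(12d-8)/(3d+4)$ with the trivial bound \eqref{Intrivial} via Theorem \ref{RTI} at $r=3$, using that $r_0<3$ precisely for $d=4,6$, and your exponent $|E|^{1-r_0/12}=|E|^{\frac{6d+14}{9d+12}}$ matches the paper's computation (the paper merely labels $\theta$ as the weight on the $r_0$ end rather than on the $\infty$ end).
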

 \begin{proof}
 Note that if $d=4$ or $6,$ then $\frac{12d-8}{3d+4}<3<\infty.$
 Therefore, using Theorem \ref{RTI}, we are able to interpolate \eqref{Intrivial} and \eqref{In1} so that we obtain
 $$ \|\widetilde{E}\|_{L^3(S_t, d\sigma)} \lesssim |E|^{1-\theta}
 |E|^{\frac{3\theta}{4}} \quad\mbox{with}~~\theta=\frac{12d-8}{9d+12}.$$
 Namely, we see that
 $$\|\widetilde{E}\|_{L^3(S_t, d\sigma)} \lesssim |E|^{\frac{6d+14}{9d+12}}= \|E\|_{L^{\frac{9d+12}{6d+14}}(\mathbb F_q^d, d{\bf m})}.$$
 Thus, the proof is complete.
 \end{proof}
 We are ready to prove statement (1) of Theorem \ref{mainthm}.
 We aim to prove that if $d=4$ or $d=6$, and $ E_1,E_2,E_3\subset \mathbb F_q^d$ with $|E_1||E_2||E_3|\gtrsim q^{3\left(\frac{d+1}{2} -\frac{1}{6d+2}\right)}$, then $|\Delta_3(E_1,E_2,E_3)|\gtrsim q.$
 Combining Lemma \ref{Formulalem} and Lemma \ref{F1}, we see that
 $$ |\Delta_{3}(E_1, E_2, E_3)| \gtrsim
\min\left\{q, ~ \frac{\left( \prod\limits_{j=1}^3 |E_j| \right)^{\frac{4}{3}-\frac{6d+14}{9d+12}}} { q^{d-1}}\right\}.$$
Since $|E_1||E_2||E_3|\ge C q^{3\left(\frac{d+1}{2} -\frac{1}{6d+2}\right)}$ for a sufficiently large $C>0$,  we see from a direct computation that
$$ |\Delta_{3}(E_1, E_2, E_3)| \gtrsim q.$$

 \subsection{Proof of statement  $(2)$ of Theorem \ref{mainthm}}
 We shall utilize the following key lemma.
 \begin{lemma} \label{F2}If $d\ge 4$ is even, then we have
 $$ \|\widetilde{E}\|_{L^4(S_t, d\sigma)} \lesssim
 \|E\|_{L^{\frac{12d+16}{9d+18}}(\mathbb F_q^d, d{\bf m})} \quad\mbox{for all}~~ E \subset \mathbb F_q^d,~t\ne 0. $$
 \end{lemma}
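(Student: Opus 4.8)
The plan is to follow the proof of Lemma \ref{F1} almost verbatim, replacing the target exponent $3$ by $4$. The $L^4$ restriction estimate will be produced by interpolating the trivial bound \eqref{Intrivial}, $\|\widetilde{E}\|_{L^{\infty,\infty}(S_t,d\sigma)}\lesssim|E|$, against the weak-type bound \eqref{In1}, $\|\widetilde{E}\|_{L^{(12d-8)/(3d+4),\infty}(S_t,d\sigma)}\lesssim|E|^{3/4}$, which holds for every even $d\ge 4$; the interpolation is carried out by Theorem \ref{RTI}.

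First I would record that Theorem \ref{RTI} applies at the exponent $r=4$, which only requires the strict inequality $\tfrac{12d-8}{3d+4}<4$; since $12d-8<12d+16$ for every $d$, this is valid for all even $d\ge 4$ (in contrast to Lemma \ref{F1}, where $3<\tfrac{12d-8}{3d+4}$ fails once $d\ge 8$, forcing the restriction $d\in\{4,6\}$). Then I would apply Theorem \ref{RTI} with the two endpoints $\tfrac{12d-8}{3d+4}$ and $\infty$: the weight $\theta$ of the finite endpoint is determined by $\tfrac14=\tfrac{\theta(3d+4)}{12d-8}$, giving $\theta=\tfrac{3d-2}{3d+4}$, and estimate \eqref{Ma2} then yields
\[ \|\widetilde{E}\|_{L^4(S_t,d\sigma)}\lesssim |E|^{1-\theta}\,|E|^{\frac{3\theta}{4}}=|E|^{1-\frac{\theta}{4}}. \]

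It remains only to simplify: with $\theta=\tfrac{3d-2}{3d+4}$ one gets $1-\tfrac{\theta}{4}=\tfrac{9d+18}{12d+16}$, and since $\|E\|_{L^{\ell}(\mathbb F_q^d,d\textbf{m})}=|E|^{1/\ell}$ this is exactly $\|\widetilde{E}\|_{L^4(S_t,d\sigma)}\lesssim\|E\|_{L^{(12d+16)/(9d+18)}(\mathbb F_q^d,d\textbf{m})}$, which is the claim. I expect no genuine obstacle here, as the argument is completely parallel to the proof of Lemma \ref{F1}; the only points that need attention are verifying $\tfrac{12d-8}{3d+4}<4$ so that the interpolation is legitimate for all even $d\ge 4$, and carrying out the exponent bookkeeping correctly.
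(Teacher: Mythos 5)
Your proof is correct and follows essentially the same route as the paper: interpolate the trivial bound \eqref{Intrivial} with the weak-type estimate \eqref{In1} via Theorem \ref{RTI} at $r=4$ (legitimate since $\frac{12d-8}{3d+4}<4$ for all even $d\ge 4$), with the same weight $\theta=\frac{3d-2}{3d+4}$ and the same final exponent $\frac{9d+18}{12d+16}$, hence $\|\widetilde{E}\|_{L^4(S_t,d\sigma)}\lesssim \|E\|_{L^{(12d+16)/(9d+18)}(\mathbb F_q^d, d{\bf m})}$. One small aside: in your parenthetical about Lemma \ref{F1}, the condition that fails for $d\ge 8$ is $\frac{12d-8}{3d+4}<3$ (not $3<\frac{12d-8}{3d+4}$), but this does not affect the argument for the present lemma.
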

 \begin{proof}
 Since $\frac{12d-8}{3d+4}<4<\infty$ for all even $d\ge 4,$ interpolating \eqref{Intrivial} and \eqref{In1} yields
 $$\|\widetilde{E}\|_{L^4(S_t, d\sigma)} \lesssim |E|^{1-\theta}
 |E|^{\frac{3\theta}{4}} \quad\mbox{with}~~\theta=\frac{3d-2}{3d+4}.$$
 Since $|E|^{1-\theta}  |E|^{\frac{3\theta}{4}} = |E|^{\frac{9d+18}{12d+16}}=\|E\|_{L^{\frac{12d+16}{9d+18}}(\mathbb F_q^d, d{\bf m})}$, the statement follows.
 \end{proof}
Let us prove statement (2) of Theorem \ref{mainthm}.
Recall that we must show that if $d\ge 8$ is even and $E_1,E_2,E_3,E_4\subset \mathbb F_q^d$ with $\prod\limits_{j=1}^4 |E_j| \gtrsim q^{4\left(\frac{d+1}{2} -\frac{1}{6d+2}\right)},$ then $|\Delta_4(E_1,E_2,E_3, E_4)|\gtrsim q.$
 Combining Lemma \ref{Formulalem} and Lemma \ref{F2}, we obtain that
 $$ |\Delta_{4}(E_1, E_2, E_3, E_4)| \gtrsim
\min\left\{q, ~ \frac{\left( \prod\limits_{j=1}^4 |E_j| \right)^{\frac{5}{4}-\frac{9d+18}{12d+16}}} { q^{d-1}}\right\}.$$
Since $|E_1||E_2||E_3||E_4|\ge C q^{4\left(\frac{d+1}{2} -\frac{1}{6d+2}\right)}$ for a sufficiently large $C>0$,  it follows  from a direct computation that
$$ |\Delta_{4}(E_1, E_2, E_3, E_4)| \gtrsim q.$$
 \subsection{Proof of statement  $(3)$ of Theorem \ref{mainthm}}
 We begin by proving the following lemma.
 \begin{lemma} \label{Kong}
 Let $E\subset \mathbb F_q^d.$ If $d\ge 8$ is even and $|E|\ge  q^{\frac{d-1}{2}},$ then we have
 $$ \|\widetilde{E}\|_{L^3(S_t, d\sigma)} \lesssim
 q^{\frac{-3d^2+23d-20}{36d-96}} \|E\|_{L^{ \frac{18d-48}{15d-46} }(\mathbb F_q^d, d{\bf m})}\quad \mbox{for all}~~t\in \mathbb F_q^*. $$
 \end{lemma}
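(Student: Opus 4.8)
The plan is to derive this $L^3$ restriction estimate by interpolating between an $L^2$ bound and the weak-type $L^{(12d-8)/(3d+4),\infty}$ bound, using the machinery of Theorem \ref{RTI}. First I would record the two endpoint estimates, valid for every $E$ in the class $\Omega=\{E\subset\mathbb F_q^d:|E|\ge q^{(d-1)/2}\}$. From the weak-type restriction estimate \eqref{In1} (which holds for all even $d\ge4$ by Lemma \ref{Insook1}) we have, for all $t\in\mathbb F_q^*$,
$$\|\widetilde{E}\|_{L^{\frac{12d-8}{3d+4},\infty}(S_t,d\sigma)}\lesssim|E|^{3/4}.$$
From Lemma \ref{minj1}, combined with the Lorentz nesting $\|\cdot\|_{L^{2,\infty}(S_t,d\sigma)}\lesssim\|\cdot\|_{L^{2}(S_t,d\sigma)}$ and the hypothesis $|E|\ge q^{(d-1)/2}$, we get
$$\|\widetilde{E}\|_{L^{2,\infty}(S_t,d\sigma)}\lesssim|E|\,q^{-\frac{d-1}{4}}.$$

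Next I would check that the exponents are positioned for interpolation: for even $d\ge8$ one has $2<3<\frac{12d-8}{3d+4}<4$, so $r=3$ lies strictly between $r_0=2$ and $r_1=\frac{12d-8}{3d+4}$. Applying Theorem \ref{RTI} with $A_0=|E|q^{-(d-1)/4}$ and $A_1=|E|^{3/4}$, and solving $\frac13=\frac{1-\theta}{2}+\frac{\theta(3d+4)}{12d-8}$, gives $\theta=\frac{6d-4}{9d-24}\in(0,1)$. Here I would note that the implied constant $\bigl(\max\{\tfrac{2r}{r-r_0},\tfrac{2r}{r_1-r}\}\bigr)^{1/r}$ appearing in \eqref{Ma1} is bounded uniformly in $q$ and in $d\ge8$, since $\frac{2r}{r-r_0}=6$ and $r_1-3=\frac{3d-20}{3d+4}\ge\frac17$. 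Hence \eqref{Ma2} yields
$$\|\widetilde{E}\|_{L^3(S_t,d\sigma)}\lesssim A_0^{1-\theta}A_1^{\theta}=|E|^{1-\frac{\theta}{4}}\,q^{-\frac{(d-1)(1-\theta)}{4}}.$$

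The final step is routine bookkeeping: with $\theta=\frac{6d-4}{9d-24}$ one computes $1-\frac{\theta}{4}=\frac{15d-46}{18d-48}$ and $1-\theta=\frac{3d-20}{9d-24}$, so that $-\frac{(d-1)(1-\theta)}{4}=\frac{-3d^2+23d-20}{36d-96}$, which gives exactly
$$\|\widetilde{E}\|_{L^3(S_t,d\sigma)}\lesssim q^{\frac{-3d^2+23d-20}{36d-96}}\,\|E\|_{L^{\frac{18d-48}{15d-46}}(\mathbb F_q^d,d{\bf m})}.$$
There is no serious obstacle: the argument is a clean interpolation between Lemma \ref{minj1} and \eqref{In1}. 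The points that require a little care are (i) verifying $2<3<\frac{12d-8}{3d+4}$ so that $3$ is genuinely an interior interpolation exponent for all even $d\ge8$, (ii) passing from the $L^2$ estimate of Lemma \ref{minj1} to an $L^{2,\infty}$ estimate so that Theorem \ref{RTI} applies, and (iii) confirming that the hypothesis $|E|\ge q^{(d-1)/2}$ — which is precisely what Lemma \ref{minj1} requires — is carried through the whole argument.
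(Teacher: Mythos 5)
Your proposal is correct and follows essentially the same route as the paper: pass from Lemma \ref{minj1} to an $L^{2,\infty}$ bound, pair it with the weak-type bound \eqref{In1}, and interpolate via Theorem \ref{RTI} at $r=3$ with $\theta=\frac{6d-4}{9d-24}$, which gives exactly the stated exponents. The only difference is cosmetic: you additionally verify the uniform boundedness of the constant in \eqref{Ma1}, which the paper leaves implicit.
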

 \begin{proof}

 Since $\|\widetilde{E}\|_{L^{2,\infty} (S_t, d\sigma)} \le \|\widetilde{E}\|_{L^{2} (S_t, d\sigma)}$ and $|E| \ge q^{\frac{d-1}{2}}$, we see from Lemma \ref{minj1} that
 \begin{equation} \label{hyb2}
 \|\widetilde{E}\|_{L^{2, \infty}(S_t, d\sigma)} \lesssim {q^{-\frac{(d-1)}{4}}} |E| \quad\mbox{for all}~~t\in \mathbb F_q^* \quad\mbox{and} ~E\subset \mathbb F_q^d ~~\mbox{with}~~|E|\ge  q^{\frac{d-1}{2}}.
 \end{equation}
As in \eqref{In1}, we also see that
 \begin{equation}\label{Hy1}\|\widetilde{E}\|_{L^{\frac{12d-8}{3d+4}, \infty}(S_t, d\sigma)}\lesssim |E|^{\frac{3}{4}}\quad\mbox{for all}~~ E \subset \mathbb F_q^d, ~t\in \mathbb F_q^*.\end{equation}

 Since $2<3< \frac{12d-8}{3d+4}$ for $d\ge 8,$  by using Theorem  \ref{RTI} we are able to interpolate \eqref{hyb2} and \eqref{Hy1}. Hence, if $d\ge 8$ is even and $|E|\ge q^{\frac{d-1}{2}}$, then  we have
 $$ \|\widetilde{E}\|_{L^3(S_t, d\sigma)} \lesssim \left({q^{-\frac{(d-1)}{4}}} |E|\right)^{1-\theta}
 |E|^{\frac{3\theta}{4}} \quad\mbox{with}~~\theta=\frac{6d-4}{9d-24}.$$
 By a direct computation, we conclude
 $$\|\widetilde{E}\|_{L^3(S_t, d\sigma)} \lesssim q^{\frac{-3d^2+23d-20}{36d-96}} |E|^{\frac{15d-46}{18d-48}} =q^{\frac{-3d^2+23d-20}{36d-96}} \|E\|_{L^{ \frac{18d-48}{15d-46}}(\mathbb F_q^d, d{\bf m})},$$
 which completes the proof of the lemma.
 \end{proof}
 Let us prove the statement  $(3)$ of Theorem \ref{mainthm} which states that
 if $d\geq 8$ is even and $\prod\limits_{j=1}^3 |E_j| \gtrsim q^{3\left(\frac{d+1}{2} -\frac{1}{9d-18}\right)},$
then   $|\Delta_3(E_1,E_2,E_3)|\gtrsim q.$
To prove this, let us first assume that one of $|E_1|, |E_2|, |E_3|$ is  less than $q^{\frac{d-1}{2}}$, say that $|E_3|< q^{\frac{d-1}{2}}.$
Then by our hypothesis that $|E_1||E_2||E_3| \gtrsim q^{3\left(\frac{d+1}{2} -\frac{1}{9d-18}\right)},$ it must follow that
 $$|E_1||E_2| \gtrsim q^{-\frac{(d-1)}{2}}  q^{3\left(\frac{d+1}{2} -\frac{1}{9d-18}\right)} = q^{(d+1)+ \frac{3d-7}{3d-6}} > q^{d+1}.$$
This implies that $|\Delta_2(E_1,E_2)| \gtrsim q,$ which was proved by Shparlinski \cite{Sh06}. Thus it is clear that $ |\Delta_3(E_1,E_2,E_3)|\gtrsim q$, because $|\Delta_3(E_1,E_2,E_3)|\ge |\Delta_2(E_1,E_2)|.$
For this reason, we may assume that all of $|E_1|, |E_2|, |E_3|$ are greater than or equal to $q^{\frac{d-1}{2}}$ and $|E_1||E_2||E_3|\gtrsim q^{3\left(\frac{d+1}{2} -\frac{1}{9d-18}\right)}.$
Combining Lemma \ref{Formulalem} with Lemma \ref{Kong} , we obtain that
 $$ |\Delta_{3}(E_1, E_2, E_3)| \gtrsim
\min\left\{q, ~ \frac{\left( \prod\limits_{j=1}^3 |E_j| \right)^{\frac{4}{3}-\frac{1}{\ell}}} { q^{3\alpha+d-1}}\right\},$$
where we take $\alpha=\frac{-3d^2+23d-20}{36d-96}$ and $\ell=\frac{18d-48}{15d-46}.$ By a direct comparison, it is not hard to see that
if $|E_1||E_2||E_3| \gtrsim q^{3\left(\frac{d+1}{2} -\frac{1}{9d-18}\right)} = q^{\frac{9d^2-9d-20}{6d-12}},$ then we have
$|\Delta_3(E_1,E_2,E_3)|\gtrsim q.$ We have finished the proof of the third part of Theorem \ref{mainthm}.

\end{document}